\newtheorem{theorem}{Theorem}[section]
\newtheorem{lemma}[theorem]{Lemma}
\newtheorem{proposition}[theorem]{Proposition}
\newtheorem*{theorem*}{Theorem~\ref{th:characterization}}
\theoremstyle{definition}
\theoremstyle{remark} \theoremstyle{remark}
\newtheorem{remark}[theorem]{Remark}
\newtheorem{example}[theorem]{Example}
\DeclareMathOperator{\id}{id}
\newcommand{\R}{{\mathbb R}}
\numberwithin{equation}{section}
\begin{document}

\title[]{Para-Sasakian $\phi-$symmetric spaces}

\author[E. Loiudice]{Eugenia Loiudice}
 \address{Philipps Universit\"at Marburg, Fachbereich Mathematik und Informatik, Hans-Meerwein-Straße, 35032 Marburg, Germany}
\email{loiudice@mathematik.uni-marburg.de}

\begin{abstract}
We study the Boothby--Wang fibration of para-Sasakian manifolds and introduce the class of para-Sasakian $\phi$-symmetric spaces, canonically fibering over para-Hermitian symmetric spaces. Using this fibration we give a method to explicitly construct semisimple para-Sasakian $\phi$-symmetric spaces. We provide moreover an example of non-semisimple para-Sasakian $\phi$-symmetric space. 
\end{abstract}

\subjclass[2010]{53C30, 53C15, 53D10}

\keywords{}

\maketitle

\section{Introduction}
Para-contact metric manifolds were introduced in \cite{KaneyukiWilliams} as odd-dimensional analogue  
of para-Hermitian spaces. Actually, they appear naturally when considering $S^1$-bundles over para-Hermitian spaces (see Section~2 of \cite{KaneyukiWilliams}). In particular, $S^1$-bundles over para-K\"ahler spaces are para-Sasakian manifolds. 
Examples of para-Sasakian manifolds are also obtained starting from some contact metric spaces: in \cite[Theorem~3.1]{mino-carriazo-molina} the authors showed that all contact metric $(\kappa,\mu)$-spaces with Boeckx invariant $I\in(-1,1)$ admit natural modifications (suggested by the bi-Legendrian structure on these manifolds, \cite{mino-diterlizzi}) of their structure tensors providing a para-Sasakian structure on the same manifold. 
Looking at the classification of contact metric $(\kappa,\mu)$-spaces \cite{loiudice-lotta}, we know that $(\kappa,\mu)$-spaces with Boeckx invariant $I\in(-1,1)$ locally fiber via the canonical Boothby-Wang fibration over the paracomplexification of a sphere, that is in particular a para-Hermitian symmetric space.

Motivated from this example we introduce in the present paper the class of para-Sasakian $\phi$-symmetric spaces. We show that, similarly to the Riemannian case, para-Sasakian $\phi$-symmetric spaces correspond (via the Boothby--Wang fibration) to para-Hermitian symmetric spaces, see Theorem~\ref{th:characterization}. In the same theorem we characterize the $\phi$-symmetric condition using the curvature tensor of both the Levi Civita connection and of the canonical para-contact connection.
Moreover, we show in Theorem~\ref{th:hom} that the closure $G \subset \text{Iso}(M)$ in the isometry group of a para-Sasakian manifold $M$ of the subgroup generated by all the $\phi$-geodesic symmetries acts transitively on $M$.

In section~\ref{sec:semisimple} we explicitly construct semisimple, para-Sasakian $\phi$-symmetric manifolds $G/H$ (namely para-Sasakian $\phi$-symmetric manifolds with $G$ semi- simple) starting from homogeneous semisimple, para-Hermitian symmetric spaces $G/K$. 
We remark that, in contrast to the Riemannian case, the center of the isotropy group $H$ of a simple para-Hermitian symmetric space $G/H$ is not always one-dimensional, indeed it can also have dimension $2$ (see \cite[Section 9]{koh}); moreover the metric of a simple para-Hermitian symmetric space $G/H$ is not necessarily the $G$-invariant extension of the Killing form of $G$ (see Proposition~\ref{prop:metric}).
Finally, in section~\ref{sec:non-semisimple} we consider a quadratic extension of the Lie algebra $\mathfrak{sl}(2,\mathbb{R})$ with a given involution, obtaining an example of non-semisimple para-Sasakian $\phi$-symmetric space.

The author thanks Ines Kath for suggesting the non-semisimple example and Antonio Lotta and Oliver Goertsches for useful discussions.

\section{Preliminaries}

A $2n$-dimensional smooth manifold $M$ endowed with 
an \emph{almost para-complex structure}, that is a $(1,1)$-tensor $I$ satisfying
\begin{itemize}
 \item $I^2=id$
 \item for every $p\in M$, the $\pm 1$-eigenspaces of $I_p$ are both $n$-dimensional subspaces of $T_pM$,
\end{itemize}
is called an \emph{almost para-complex} manifold.
If $I$ is in addition \emph{integrable}, i.e.,  if its Nijenhuis tensor 
 \[
  N_I(X,Y):=[IX,IY]-I[IX,Y]-I[X,IY]+[X,Y]
 \]
vanishes identically on $M$, then $I$ is a \emph{para-complex structure} on $M$ and $M$ is a \emph{para-complex manifold}.

Observe that the existence of an (integrable) almost para-complex structure on $M$ is equivalent to the existence of a pair of rank $n$ (integrable) distributions $(\mathcal{D}_+,\mathcal{D}_-)$, satisfying
\[
 TM=\mathcal{D}_+\oplus \mathcal{D}_-.
\]

\begin{example}
The product of any two equal-dimensional manifolds together with the obvious decomposition of its tangent bundle is a para-complex manifold.
 \end{example}
\begin{example}
In \cite[Proposition 6.3]{ALEKSEEVSKY-MEDORI-TOMASSINI} the authors showed that the invariant para-complex structures on a homogeneous manifold $G/H$ are in one-to-one correspondence with decompositions of the Lie algebra $\mathfrak{g}$ of $G$ of type
   $$
    \mathfrak{g}=\mathfrak{g}_+ + \mathfrak{g}_-,
   $$
with $\mathfrak{g}_+,\mathfrak{g}_-$ $Ad_H$-invariant subalgebras, such that $\mathfrak{g}_+\cap \mathfrak{g}_-=\mathfrak{h}$, $\dim \mathfrak{g}_+/\mathfrak{h}=\dim \mathfrak{g}_-/\mathfrak{h}$.
Explicit examples of homogeneous para-complex (in particular para-Hermitian symmetric) manifolds will appear later. 
 \end{example}

\begin{example}
Observe that any even dimensional sphere $S^{2n}$ does not admit an almost para-complex structure:
 Assume that the tangent bundle of $S^{2n}$ is the Whitney sum of two non-trivial subbundles
 \begin{equation}\label{es:TS}
   TS^{2n}=\mathcal{D}_+\oplus \mathcal{D}_-.
 \end{equation}
 As $S^{2n}$ is $(2n-1)$-connected, the bundles $\mathcal{D}_+$ and $\mathcal{D}_-$ have vanishing Euler class. Moreover, by \eqref{es:TS} the Euler class $e(TS^{2n})$ of the tangent bundle of $S^{2n}$ satisfies
 \[
  e(TS^{2n})=e(\mathcal{D}_+)\cup e(\mathcal{D}_-)
 \]
and hence it vanishes, which is a contradiction.
\end{example}

 An (almost) para-complex manifold $(N,I)$ endowed with a pseudo-Rie- mannian metric $g$ which is \emph{compatible} with $I$, namely such that
\[
 g(IX, Y)+g(X,IY)=0, 
\]
is called an \emph{(almost) para-Hermitian} manifold.
If moreover the \emph{fundamental $2$-form} $\omega$ of $(N,I,g)$
\[
 \omega(X,Y):=g(X,IY),
\]
is closed, then the (almost) para-Hermitian manifold $(N,I,g)$ is called \emph{(almost) para-K\"ahler}.

\vspace{0.2 cm}

Observe that in this case $\omega$ is symplectic and its restriction to each of the two $n$-dimensional distributions $\mathcal{D}_{-},\mathcal{D}_+$ determined by $I$ vanishes, namely $\mathcal{D}_{-},\mathcal{D}_+$ are integrable \emph{Lagrangian distributions}.

\vspace{0.2 cm}

Para-Hermitian and para-K\"ahler manifolds were defined by Libermann in \cite{Libermann1}, \cite{Libermann2}. 

\vspace{0.2 cm}

We recall that a symplectic manifold $(M,\omega)$ endowed with two complementary Lagrangian distributions (resp. Lagrangian foliations) $\mathcal{D}_1,\mathcal{D}_2$ is called an \emph{almost bi-Lagrangian} (resp. a \emph{bi-Lagrangian}) manifold, and $(\mathcal{D}_1,\mathcal{D}_2)$ is an \emph{almost bi-Lagrangian} (resp. a \emph{bi-Lagrangian}) structure on $M$.

\vspace{0.2 cm}
In \cite[Theorem 5, Theorem 6]{Etayo} the authors observed that (almost) bi-Lagrangian manifolds and (almost) para-K\"ahler manifolds are the same object, namely they proved the following proposition.
\begin{proposition}
 Let $M$ be a smooth manifold. There is a bijection between (almost) bi-Lagrangian structures on $M$ and (almost) para-K\"ahler structures on $M$.
\end{proposition}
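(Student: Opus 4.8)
The plan is to write down the two maps explicitly and to check that they are mutually inverse; the only step with any real content is a standard identity for the Nijenhuis tensor of an almost product structure.

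\emph{From para-K\"ahler to bi-Lagrangian.} Given an (almost) para-K\"ahler structure $(I,g)$ on $M$, set $\omega(X,Y):=g(X,IY)$ and let $\mathcal{D}_\pm$ be the $\pm 1$-eigendistributions of $I$, so that $TM=\mathcal{D}_+\oplus\mathcal{D}_-$. By hypothesis $\omega$ is closed, and it is nondegenerate since $g$ is nondegenerate and $I$ is invertible, so $\omega$ is symplectic; as observed above, $\mathcal{D}_+$ and $\mathcal{D}_-$ are Lagrangian. Thus $(\omega,\mathcal{D}_+,\mathcal{D}_-)$ is an almost bi-Lagrangian structure, and it is bi-Lagrangian precisely when $\mathcal{D}_\pm$ are foliations.

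\emph{From bi-Lagrangian to para-K\"ahler.} Given an (almost) bi-Lagrangian structure $(\omega,\mathcal{D}_1,\mathcal{D}_2)$, define $I$ by $I|_{\mathcal{D}_1}=\id$ and $I|_{\mathcal{D}_2}=-\id$, and set $g(X,Y):=\omega(X,IY)$. Then $I^2=\id$, and each eigendistribution has rank $n$ because a Lagrangian subspace of a $2n$-dimensional symplectic vector space has dimension $n$; so $I$ is an almost para-complex structure. Decomposing $X=X_1+X_2$, $Y=Y_1+Y_2$ along $\mathcal{D}_1\oplus\mathcal{D}_2$ and using that $\omega$ vanishes on $\mathcal{D}_1$ and on $\mathcal{D}_2$, one computes $g(X,Y)=\omega(X_2,Y_1)-\omega(X_1,Y_2)$, which is symmetric; $g$ is nondegenerate since $\omega$ is and $I$ is invertible; and the same decomposition yields $\omega(IX,IY)=-\omega(X,Y)$, equivalently $g(IX,Y)+g(X,IY)=0$, so that $g$ is compatible with $I$. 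Since $g(X,IY)=\omega(X,I^2Y)=\omega(X,Y)$ and $\omega$ is closed, $(I,g)$ is an almost para-K\"ahler structure.

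\emph{Matching integrability, and inverseness.} A direct computation gives $N_I(X,Y)=2([X,Y]-I[X,Y])$ for $X,Y$ sections of $\mathcal{D}_+$, $N_I(X,Y)=2([X,Y]+I[X,Y])$ for $X,Y$ sections of $\mathcal{D}_-$, and $N_I(X,Y)=0$ when $X$ is a section of $\mathcal{D}_+$ and $Y$ a section of $\mathcal{D}_-$. Hence $N_I\equiv 0$ if and only if both $\mathcal{D}_+$ and $\mathcal{D}_-$ are involutive, so the two constructions interchange ``bi-Lagrangian'' with ``para-K\"ahler'' and ``almost bi-Lagrangian'' with ``almost para-K\"ahler''. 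They are mutually inverse: starting from $(I,g)$ one recovers $I$ (both constructions make the para-complex structure act as $\pm\id$ on $\mathcal{D}_\pm$) and the metric $\omega(\cdot,I\,\cdot)=g(\cdot,I^2\,\cdot)=g$, and starting from $(\omega,\mathcal{D}_1,\mathcal{D}_2)$ one recovers $\omega$ together with the two eigendistributions $\mathcal{D}_1,\mathcal{D}_2$ of $I$. (Here the pair of Lagrangian distributions has to be regarded as ordered, since swapping $\mathcal{D}_1$ and $\mathcal{D}_2$ replaces $(I,g)$ by $(-I,-g)$.) I expect no serious obstacle: the work is the bookkeeping that makes $g$ symmetric, nondegenerate and compatible, together with keeping track of the signs in the Nijenhuis identity.
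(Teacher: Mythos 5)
Your proof is correct, and it is the standard explicit construction: the paper itself does not prove this proposition but simply cites it from Etayo--Santamar\'{\i}a, so your argument supplies exactly the verification that reference contains (the two mutually inverse assignments $(I,g)\mapsto(\omega,\mathcal{D}_+,\mathcal{D}_-)$ and $(\omega,\mathcal{D}_1,\mathcal{D}_2)\mapsto(I,g)$, plus the Nijenhuis computation showing $N_I\equiv 0$ is equivalent to involutivity of both eigendistributions). Your parenthetical caveat that the Lagrangian pair must be taken as ordered, since swapping the two distributions replaces $(I,g)$ by $(-I,-g)$, is a correct and worthwhile precision for the bijection to hold as stated.
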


One can observe that (cf. \cite[Theorem 1.3]{Merker}) the existence of an almost bi-Lagrangian structure on an (almost) symplectic manifold $(M,\omega)$ is equivalent to the reduction of the structure group of $TM$ from $Sp(2n)$ to the para-unitary group 
\[
 U(n,\mathbb{A}):=\left\{
A \in M_{n}(\mathbb{A}) \;|\; A \cdot \bar{A}=Id
\right\}.
\]
Here $\mathbb{A}$ denotes the algebra of para-complex numbers
\[
 \mathbb{A}:=\{a+kb\;| \; a,b\in \mathbb{R} \text{ and } k^2=1\}, 
\]
where the conjugation $\bar{\cdot}:\mathbb{A}\rightarrow \mathbb{A} $ is defined by
\[
 \overline{a+kb}=a-kb
\]

Combining this result with the fact that any symplectic manifold admits a compatible (tame) almost complex structure $J$ and a compatible almost para-complex structure $I$ satisfying 
\[
 IJ+JI=0
\]
(see \cite[Lemma 3.4]{Merker}), we have that the structure group of $TM$ is reducible from $Sp(2n)$ to 
$U(n,\mathbb{A})\cap U(n)=O(n)$.

\vspace{0.2 cm}
This gives a topological obstruction for the existence of a compatible almost para-complex structure on $(M,\omega)$ (cf. \cite[Proposition 4.1]{Merker}), as the reducibility of the structure group of $TM$ to $O(n)$ implies that the odd Chern classes of $(M,\omega)$ vanish.

\vspace{0.2 cm}

We recall that a para-Hermitian manifold $(N,I,g)$ admitting
at every point $p\in N$ a \emph{symmetry},
i.e., an automorphism 
$$s_p:N\rightarrow N,$$
of the para-Hermitian structure that reverses the geodesics trough $p$, namely such that
\[s_p(\gamma(t))=\gamma(-t),\] 
for every $\gamma$ geodesic, with $\gamma(0)=p$,
is called \emph{para-Hermitian symmetric} space.
It is known that (see \cite{KaneyukiKozai2}), every para-Hermitian symmetric space $(N,I,g)$ is para-K\"ahler, with authomorphism group $Aut(N,I,g)$ acting transitively on it.

\vspace{0.2 cm}

As in the next sections we will consider principal $G$-bundles, with $G=\mathbb{R},S^1$, over para-Hermitian symmetric spaces (which are in particular para-Sasakian manifolds), we recall the definition of contact and of para-Sasakian manifolds.

\vspace{0.2 cm}
A \emph{contact manifold} is a $(2n+1)$-dimensional manifold $M$ together with a $1$-form $\eta$, called \emph{contact form}, satisfying 
\[
 \eta\wedge (d\eta)^n\neq 0,
\]
everywhere. 
We have that $M$ is orientable and the subbundle $\ker \eta$ of the tangent bundle of $M$ determines a totally nonintegrable distribution $D$ of rank $2n$, called the \emph{horizontal distribution} of $(M,\eta)$. 
For every $p\in M$, the restriction of $d\eta$ to the fiber $D_p$ of $D$ 
\[
 d \eta_p|_{D_p\times D_p}:D_p\times D_p \rightarrow \mathbb{R},
\]
is nondegenerate, and 
\[
 T_pM= D_p \oplus \ker d\eta_p.
\]
The vector field $\xi$ on $M$ such that 
\[
 \xi_p\in \ker d\eta_p, \quad \eta_p(\xi_p)=1,
\]
for every $p\in M$, is called the \emph{Reeb vector field} of the contact manifold $(M,\eta)$, and any vector field $X$ such that $X_p\in D_p$ for every $p\in M$ is called \emph{horizontal}.

\vspace{0.2 cm}

A \emph{para-contact metric} manifold is a contact manifold $(M,\eta)$ together with a $(1,1)$-tensor $\phi$ and a semi-Riemannian metric $g$ satisfying
\begin{equation}\label{eq:paracontactmatric1}
 \phi^2=Id-\eta\otimes \xi, 
\end{equation}
\begin{equation}\label{eq:paracontactmatric2}
 g(\phi X,\phi Y)=-g(X,Y)+\eta(X)\eta(Y),
\end{equation}
\begin{equation}\label{eq:paracontactmatric3}
  d\eta(X,Y)= g(X,\phi Y).
\end{equation}
and such that $\phi$ is a para-complex structure when restricted to each fiber of the horizontal distribution.

\vspace{0.2 cm}
Let $(M,\phi,\xi,\eta,g)$ be a para-contact metric manifold.
Observe that the metric $g$ has signature $(n+1,n)$, and the tangent bundle $TM$ splits as follows
\[
 TM=\mathcal{D}_+\oplus \mathcal{D}_-\oplus \R\xi,
\]
where $\mathcal{D}_+, \mathcal{D}_-$ are the (equally dimensional) eigensubbundles of $\phi$ with eigenvalues $+1$ and $-1$ respectively.
If the \emph{Nijenhuis tensor} $N_{\phi}$ of $\phi$,
\[
 N_{\phi}(X,Y):=\phi^2[X,Y]+[\phi X,\phi Y]-\phi[\phi X,Y]-\phi[X,\phi Y],
\]
satisfies $$N_{\phi}(X,Y)-2d\eta(X,Y)\xi=0,$$
for every $X,Y\in TM$, then $M$ is called \emph{para-Sasakian} manifold.
In this case the Reeb vector field $\xi$ is Killing (namely $M$ is a \emph{$K$-para-contact} manifold)  and 
\begin{equation}\label{nabla xi}
 \nabla_X\xi=-\phi X,
\end{equation}
see \cite[Lemma~2.5]{zamkovoy}.
In \cite{zamkovoy}, the following useful properties on para-Sasakian manifolds are proved:
\begin{equation}\label{nablaphi}
 (\nabla_X \phi)Y=-g(X,Y)\xi+\eta(Y)X,
\end{equation}
(see \cite[Theorem 2.8]{zamkovoy}).
Using \eqref{nablaphi} one obtains (see \cite[Proposition 3.3]{zamkovoy}),
\begin{equation}\label{R}
 R(X,Y,\xi)=\eta(X)Y-\eta(Y)X,
\end{equation}
and hence
\begin{equation}\label{R2}
 R(X,\xi,Y)=g(X,Y)\xi-\eta(Y)X.
\end{equation}
Moreover, by \cite[Lemma 3.13]{zamkovoy}, we have that 
\begin{equation}\label{RR}
 \begin{aligned}
  R(\phi X,Y,Z)=&-R( X,\phi Y,Z)-g(\phi Z,X)Y-g(Z,\phi Y)X\\
   &+g(Z,X)\phi Y-g(Z,Y)\phi X.
 \end{aligned}
\end{equation}

\section{Para-Sasakian $\phi$-symmetric spaces}
After observing that on para-Sasakian manifolds the locally symmetric condition is too strong (see Propostion~\ref{prop:locally symmetric}), we consider on these manifolds a weaker condition, namely we define para-Sasakian $\phi$-symmetric spaces. We prove that para-Sasakian $\phi$-symmetric spaces are homogeneous and naturally fiber on para-Hermitian symmetric spaces.

\vspace{0.3 cm}

\begin{lemma}\label{lemma:nablaRxi}
 Let $(M, \phi,\xi,\eta,g)$ be a para-Sasakian manifold. Then, for all vector fields $X,Y,V$ on $M$

  \begin{equation}\label{eq:nablaRxi}
 \begin{aligned}
  (\nabla_V R)(X,Y,\xi) &=g(Y,\phi V)X-g(X,\phi V) Y+R(X,Y,\phi V) \\
                        &=g(Y,V)\phi X-g(X,V)\phi Y+\phi(R(X,Y,V)).\\
 \end{aligned}
 \end{equation}
\begin{equation}\label{eq:nablaRxi2}
 \begin{aligned}
  (\nabla_V R)(X,\xi,Y) &=g(Y,\phi X)V-g(V,Y)\phi X -R(\phi X,V,Y) \\
                        &=g(Y,\phi V)X-g(X,Y)\phi V +R(X,\phi V,Y).\\
 \end{aligned}
 \end{equation}
 \begin{equation}\label{eq:nablaRxi3}
 \begin{aligned}
  (\nabla_V R)(\xi,X,Y) &=g(Y, X)\phi V-g(\phi V,Y)X -R(\phi V,X,Y) \\
                        &=-g(Y,\phi X)V+g(V,Y)\phi X -R(V,\phi X,Y).\\
 \end{aligned}
 \end{equation}
\end{lemma}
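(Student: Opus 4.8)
The plan is to obtain all three identities by expanding $\nabla R$ from its definition,
\[
(\nabla_V R)(X,Y,Z)=\nabla_V\big(R(X,Y,Z)\big)-R(\nabla_V X,Y,Z)-R(X,\nabla_V Y,Z)-R(X,Y,\nabla_V Z),
\]
specialising the last slot (or the second one) to $\xi$, and feeding in the structural equations \eqref{nabla xi}, \eqref{nablaphi}, \eqref{R}, \eqref{R2} and \eqref{RR}. The \emph{first} equality in each of \eqref{eq:nablaRxi}, \eqref{eq:nablaRxi2}, \eqref{eq:nablaRxi3} comes out of the expansion directly, while the \emph{second} equality in each case is obtained from the first by a single curvature rewriting; the only genuinely new ingredient is one supplementary identity relating $R(X,Y,\phi Z)$ to $\phi\big(R(X,Y,Z)\big)$, which I would establish first.

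For that supplementary identity, differentiate \eqref{nablaphi} once more. Using \eqref{nabla xi} and then \eqref{nablaphi} again, a short computation gives, for the second covariant derivative of the $(1,1)$-tensor $\phi$,
\[
\big(\nabla_X(\nabla_Y\phi)-\nabla_{\nabla_X Y}\phi\big)(Z)=g(Y,Z)\,\phi X-g(Z,\phi X)\,Y ,
\]
so that antisymmetrising in $X,Y$ (the $[X,Y]$-term being absorbed since $\nabla_X Y-\nabla_Y X=[X,Y]$) and invoking the Ricci identity $R(X,Y,\phi Z)-\phi\big(R(X,Y,Z)\big)=\big(\nabla_X(\nabla_Y\phi)-\nabla_{\nabla_XY}\phi\big)(Z)-\big(\nabla_Y(\nabla_X\phi)-\nabla_{\nabla_YX}\phi\big)(Z)$ yields
\[
R(X,Y,\phi Z)=\phi\big(R(X,Y,Z)\big)+g(Y,Z)\,\phi X-g(X,Z)\,\phi Y+g(Z,\phi Y)\,X-g(Z,\phi X)\,Y .
\]

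Now for \eqref{eq:nablaRxi}: expanding $(\nabla_V R)(X,Y,\xi)$ as above, substituting \eqref{R} for $R(\cdot,\cdot,\xi)$, \eqref{nabla xi} for $\nabla_V\xi$, and using $(\nabla_V\eta)(X)=g(X,\nabla_V\xi)=-g(X,\phi V)$, all the terms containing $\eta(\nabla_V X)$, $\eta(\nabla_V Y)$, $\eta(X)\nabla_V Y$, $\eta(Y)\nabla_V X$ cancel in pairs and one is left with $g(Y,\phi V)X-g(X,\phi V)Y+R(X,Y,\phi V)$, the first line. Plugging the supplementary identity (with $Z=V$) into $R(X,Y,\phi V)$ and cancelling the $g(\cdot,\phi\,\cdot)$-terms by skew-symmetry of $d\eta$ — note $g(A,\phi B)=-g(B,\phi A)$ by \eqref{eq:paracontactmatric3} — gives the second line. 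The identity \eqref{eq:nablaRxi2} is obtained the same way, expanding $(\nabla_V R)(X,\xi,Y)$ and using \eqref{R2} for $R(X,\xi,Y)$; the cancellations leave the second line, and applying \eqref{RR} to $R(\phi V,X,Y)$ (together with skew-symmetry of $R$ in its first two slots and of $d\eta$) converts it into the first line. Finally \eqref{eq:nablaRxi3} follows from \eqref{eq:nablaRxi2} via $(\nabla_V R)(\xi,X,Y)=-(\nabla_V R)(X,\xi,Y)$ after rewriting the curvature terms with $R(A,B,C)=-R(B,A,C)$.

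The computations are all routine once organised this way; the step with actual content is the second covariant derivative of $\phi$, where several non-tensorial pieces must cancel, and the main pitfall throughout is sign bookkeeping — in particular keeping the symmetry of $g$ carefully distinct from the skew-symmetry of $d\eta$. (As an alternative, once \eqref{eq:nablaRxi} is in hand one can deduce \eqref{eq:nablaRxi2} and \eqref{eq:nablaRxi3} purely from the pair symmetry $(\nabla_V R)(A,B,C,D)=(\nabla_V R)(C,D,A,B)$ and the skew-symmetries of $\nabla R$, avoiding the second direct expansion.)
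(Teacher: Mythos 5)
Your proposal is correct, and its core is the same as the paper's: expand $(\nabla_V R)$ from the definition, feed in \eqref{nabla xi}, \eqref{R}, \eqref{R2}, and use $(\nabla_V\eta)(X)=-g(X,\phi V)$ to watch the non-tensorial terms cancel. The two places where you diverge are worth noting. First, for the passage between the two lines of each identity you prove the commutation rule
\[
R(X,Y,\phi Z)=\phi\big(R(X,Y,Z)\big)+g(Y,Z)\phi X-g(X,Z)\phi Y+g(Z,\phi Y)X-g(Z,\phi X)Y
\]
from scratch, via the second covariant derivative of $\phi$ and the Ricci identity applied to \eqref{nablaphi}; the paper instead invokes the already-quoted identity \eqref{RR} (Zamkovoy's Lemma~3.13), which is equivalent to your formula after the pair symmetry $g(R(A,B,C),D)=g(R(C,D,A),B)$. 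Your derivation is self-contained and arguably cleaner, since it exposes \emph{why} such an identity holds (it is exactly the Ricci identity for the parallel-up-to-lower-order tensor $\phi$), at the cost of one extra computation; I checked the signs and they are right. Second, for \eqref{eq:nablaRxi2} you expand $(\nabla_V R)(X,\xi,Y)$ directly using \eqref{R2}, landing on the second line and then converting to the first via \eqref{RR}, whereas the paper obtains \eqref{eq:nablaRxi2} from \eqref{eq:nablaRxi} by the skew-symmetry of the $(0,4)$-tensor $\nabla R$ in its last two arguments — precisely the shortcut you mention parenthetically at the end. Both routes work; the paper's is shorter, yours avoids having to dualize. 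The deduction of \eqref{eq:nablaRxi3} from \eqref{eq:nablaRxi2} by skew-symmetry in the first two slots is identical in both. No gaps.
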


\begin{proof}
Let $X,Y,V$ vector fields on $M$. Using \eqref{nabla xi}, \eqref{R}, \eqref{RR}, we have
 \begin{equation*}
 \begin{aligned}
  (\nabla_V R)(X,Y,\xi) =& \nabla_V(R(X,Y,\xi))-R(\nabla_VX,Y)\xi-R(X,\nabla_VY)\xi\\
                         &-R(X,Y,\nabla_V\xi)\\
                        =& \nabla_V(\eta(X)Y-\eta(Y)X)-\eta(\nabla_VX)Y+\eta(Y)\nabla_VX\\
                         &-\eta(X)\nabla_VY+\eta(\nabla_VY)X+R(X,Y,\phi V)\\
                        =& -g(X,\phi V) Y+g(Y,\phi V)X+R(X,Y,\phi V)\\
                        =& g(Y,V)\phi X-g(X,V)\phi Y+\phi(R(X,Y,V)),
 \end{aligned}
 \end{equation*}
 and hence \eqref{eq:nablaRxi}.
 Now, using \eqref{eq:nablaRxi} and basic properties of the curvature tensor, we obtain:
 \begin{equation*}
  \begin{aligned}
   g((\nabla_V R)&(X,\xi,Y),Z)= -g((\nabla_V R)(X,Z,\xi),Y)\\
   &=-g(\phi Y,X)g(V,Z)-g(V,Y)g(\phi X,Z)+g(R(Y,Z,V),\phi X)\\
   &=g(-R(\phi X,V,Y)-g(\phi Y,X)V-g(V,Y)\phi X,Z).
  \end{aligned}
 \end{equation*}
 Then, \[
       (\nabla_V R)(X,\xi,Y)=-R(\phi X,V,Y)-g(\phi Y,X)V-g(V,Y)\phi X,
      \]
      and the first equality in \eqref{eq:nablaRxi2} is proved. The second equality follows by \eqref{RR}.
      Since
      \[
       g((\nabla_VR)(\xi,X,Y),Z)=-g((\nabla_VR)(X,\xi,Y),Z),
      \]
\eqref{eq:nablaRxi3} follows from \eqref{eq:nablaRxi2}.
\end{proof}

Observe that, using \eqref{eq:nablaRxi} and \eqref{R}, we obtain:

\begin{proposition}\label{prop:locally symmetric}
Para-Sasakian, locally symmetric manifolds have constant curvature $-1$.
\end{proposition}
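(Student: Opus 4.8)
The plan is to use the hypothesis $\nabla R\equiv 0$ directly in the first line of \eqref{eq:nablaRxi}. Setting $(\nabla_V R)(X,Y,\xi)=0$ there gives, for all vector fields $X,Y,V$ on $M$,
\[
 R(X,Y,\phi V)=g(X,\phi V)Y-g(Y,\phi V)X .
\]

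First I would note that, since $\phi^2=\id-\eta\otimes\xi$, the endomorphism $\phi$ restricts to an involutive (hence bijective) endomorphism of the horizontal distribution $D=\ker\eta$; therefore, as $V$ ranges over all vector fields, $\phi V$ ranges over all horizontal vector fields. Writing an arbitrary horizontal $W$ as $W=\phi(\phi W)$, the displayed identity becomes
\[
 R(X,Y,W)=g(X,W)Y-g(Y,W)X \qquad\text{for every horizontal } W .
\]
Next I would observe that for $W=\xi$ the right-hand side equals $\eta(X)Y-\eta(Y)X$, which is exactly \eqref{R}; so the same identity holds in the Reeb direction as well. Decomposing an arbitrary $W=\phi^2 W+\eta(W)\xi$ and using linearity in the last slot, one concludes $R(X,Y,W)=g(X,W)Y-g(Y,W)X$ for all $X,Y,W$, i.e. $(M,g)$ has constant sectional curvature $-1$ (the sign being fixed consistently by \eqref{R} and \eqref{R2}).

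There is essentially no hard step here: the argument is a one-line substitution followed by bookkeeping. The only points that need a little care are checking that $\phi$ genuinely surjects onto $D$ (so that ``$\phi V$'' is not an artificially restricted class of test vectors) and that the Reeb direction is already accounted for by \eqref{R}; together these make any separate computation — in particular any appeal to \eqref{RR} or to the canonical para-contact connection — unnecessary. (One could alternatively substitute $\nabla R=0$ in the \emph{second} line of \eqref{eq:nablaRxi} and then apply $\phi$ once more, extracting the $\xi$-component via the curvature symmetries and \eqref{R}; this leads to the same conclusion without splitting $W$.)
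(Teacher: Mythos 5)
Your argument is correct and is exactly the paper's (implicit) proof: the author derives the proposition precisely by setting $\nabla R=0$ in \eqref{eq:nablaRxi} and combining the resulting identity $R(X,Y,\phi V)=g(X,\phi V)Y-g(Y,\phi V)X$ with \eqref{R} to cover the Reeb direction. The points you flag for care (surjectivity of $\phi$ onto $D$, and $g(\cdot,\xi)=\eta$) are exactly the bookkeeping the paper leaves to the reader.
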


\vspace{0.2 cm}
Let $(M,\phi,\xi,\eta,g)$ be a para-Sasakian manifold.
A \emph{$\phi$-geodesic symmetry} at $x\in M$ is a local diffeomorphism $\sigma$ at $x$ such that for every \emph{$\phi$-geodesic} $\gamma$ (that is a geodesic satisfying $\eta(\gamma'(t))=0$ for every $t$), with $\gamma(0)$ in the trajectory of the integral curve of $\xi$ passing through $x$, we have that
\begin{equation*}
 \sigma(\gamma(s))=\gamma(-s),
\end{equation*}
for every $s$ such that $\gamma(s)$ is in the domain of $\sigma$.

 We say that a para-Sasakian manifold $M$ is a \emph{locally $\phi$-symmetric} space if for every point $x\in M$ there exists a local automorphism of the contact metric structure which is also a $\phi$-geodesic symmetry at $x$.
 
 If any $\phi$-geodesic symmetry of a para-Sasakian manifold $M$ is extendable to a global automorphism of the para-contact metric structure and $\xi$ generates a $1$-parameter group of global transformations, then $M$ is called \emph{$\phi$-symmetric} or also \emph{globally $\phi$-symmetric} space.

 \vspace{0,2 cm}

 Observe that for every horizontal vector $v\in T_pM$ there is a $\phi$-geodesic with initial condition $(p,v)$ (see equation \eqref{eq:existence geodesic}), and hence we have that 
 a local automorphism $\sigma_p$ at $p\in M$ is a $\phi$-geodesic symmetry at $p$ if and only if
 \[ \sigma_p(p)=p, \quad (d \sigma_p)|_{\mathcal{D}_p}=-\id_{\mathcal{D}_p}.\]
Thus, two $\phi$-geodesic symmetries at $p$ which are also automorphisms of the contact metric structure, coincide on a neighborhood of $p$.

 \begin{theorem}\label{th:hom}
 Let $(M,\phi,\xi,\eta,g)$ be a connected para-Sasakian $\phi$-symmetric space, and $Iso(M)$ be the Lie group of isometries of $M$. Then,
the closure $G$ in $Iso(M)$ of the subgroup generated by all the $\phi$-geodesic symmetries of $M$, acts transitively on $M$. 
 \end{theorem}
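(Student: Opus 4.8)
The plan is to adapt the classical argument for Riemannian $\phi$-symmetric spaces: realise $G$ as a Lie transformation group of $M$, and deduce transitivity from the fact that every $G$-orbit is open. Since $\mathrm{Iso}(M)$ is a Lie group, its closed subgroup $G$ is a Lie subgroup acting smoothly on $M$, so each orbit $\mathcal{O}=G\cdot p$ is an (initial) submanifold. By hypothesis every $\phi$-geodesic symmetry of $M$ extends to a global automorphism of the para-contact metric structure $(\phi,\xi,\eta,g)$, and the group of such automorphisms is closed in $\mathrm{Iso}(M)$ (it consists of the $f$ with $f^{*}\eta=\eta$ and $f_{*}\phi=\phi$); hence every element of $G$ is an automorphism of the structure, and in particular preserves $\eta$ and the horizontal distribution $\mathcal{D}=\ker\eta$. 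As the orbits partition the connected manifold $M$, it is enough to show that each $\mathcal{O}$ is open.

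First I would prove $\mathcal{D}_{q}\subseteq T_{q}\mathcal{O}$ for every $q\in\mathcal{O}$, by transvections along $\phi$-geodesics. For $y\in M$ let $\sigma_{y}$ be the global automorphism restricting to a $\phi$-geodesic symmetry at $y$; it satisfies $\sigma_{y}(y)=y$, $(d\sigma_{y})|_{\mathcal{D}_{y}}=-\id_{\mathcal{D}_{y}}$, and therefore $\sigma_{y}(\delta(u))=\delta(-u)$ for every $\phi$-geodesic $\delta$ with $\delta(0)=y$ (both sides are geodesics issuing from $y$ with opposite initial velocities, as $\delta'(0)\in\mathcal{D}_{y}$). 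Given $q\in\mathcal{O}$ and $v\in\mathcal{D}_{q}$, choose by \eqref{eq:existence geodesic} a $\phi$-geodesic $\gamma$ with $\gamma(0)=q$, $\gamma'(0)=v$ defined near $0$, and for small $s$ set $\tau_{s}:=\sigma_{\gamma(s/2)}\circ\sigma_{q}\in G$. Applying the reversal property to $\gamma$ at $q$ and to $u\mapsto\gamma(u+s/2)$ at $\gamma(s/2)$ yields $\tau_{s}(\gamma(t))=\gamma(t+s)$, in particular $\tau_{s}(q)=\gamma(s)$; hence $\gamma$ lies in $\mathcal{O}$ and $v=\gamma'(0)\in T_{q}\mathcal{O}$. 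Since $v\in\mathcal{D}_{q}$ was arbitrary, $\mathcal{D}_{q}\subseteq T_{q}\mathcal{O}$.

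It remains to recover the Reeb direction inside $T_{q}\mathcal{O}$, which I expect to be the heart of the matter. On $\mathcal{O}$ the subbundle $\mathcal{D}|_{\mathcal{O}}\subseteq T\mathcal{O}$ has rank $2n$, so near $q$ one can choose a local frame $E_{1},\dots,E_{2n}$ of $\mathcal{D}|_{\mathcal{O}}$ consisting of vector fields on $\mathcal{O}$, which at $q$ is a basis of $\mathcal{D}_{q}$. Since $(M,\eta)$ is contact, $d\eta|_{\mathcal{D}_{q}\times\mathcal{D}_{q}}$ is nondegenerate, so $d\eta(E_{i},E_{j})(q)\neq0$ for some $i,j$; as $\eta$ vanishes on the $E_{k}$ along $\mathcal{O}$, the Cartan formula gives $\eta([E_{i},E_{j}])(q)\neq0$, whereas $[E_{i},E_{j}]$ is tangent to $\mathcal{O}$. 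Thus $T_{q}\mathcal{O}$ contains a vector not annihilated by $\eta$, and with $\mathcal{D}_{q}\subseteq T_{q}\mathcal{O}$ and $T_{q}M=\mathcal{D}_{q}\oplus\R\xi_{q}$ this forces $T_{q}\mathcal{O}=T_{q}M$. Therefore every orbit is open, hence also closed, $M$ is a single orbit, and $G$ acts transitively. The delicate point is exactly this last step: since the $\phi$-geodesic symmetries commute with the flow of $\xi$ they fix each Reeb orbit pointwise, so the $G$-orbit can a priori grow only transversally to $\xi$; it is the non-integrability of $\mathcal{D}$ (equivalently $\eta\wedge(d\eta)^{n}\neq0$) that supplies the missing $\xi$-direction. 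The passage from a $\phi$-geodesic symmetry to its global extension, and the handling of $\mathcal{O}$ as a locally embedded submanifold, require only routine care.
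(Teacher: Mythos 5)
Your proposal is correct and follows essentially the same route as the paper: use $\phi$-geodesic symmetries (composed as transvections along $\phi$-geodesics) to show $\mathcal{D}_q\subseteq T_q(G\cdot p)$ at every point of the orbit, and then invoke the complete non-integrability of the contact distribution to recover the Reeb direction and conclude that each orbit is open, hence all of $M$. Your bracket computation with a local frame of $\mathcal{D}|_{\mathcal{O}}$ merely makes explicit the step the paper compresses into ``$\mathcal{D}$ is completely nonintegrable and of codimension $1$.''
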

 
 \begin{proof}
Let $p\in  M$ and $v\in \mathcal{D}_p\subset T_pM$ be any horizontal vector. We denote by $\gamma$ the geodesic with initial condition $(p,v)$. As $\xi$ is Killing, 
\begin{equation}\label{eq:existence geodesic}
 \frac{d}{dt} g_{\gamma(t)}(\gamma'(t),\xi_{\gamma(t)})
 =g(\gamma'(t), \nabla_{\gamma'(t)}\xi)=0,
\end{equation}
and thus, since $g_{\gamma(0)}(\gamma'(0),\xi_{\gamma(0)})=0$,  $\gamma'(t)$ is orthogonal to $\xi$ for every $t$.
Observe that the image of the geodesic $\gamma$ is contained in the orbit $G\cdot p$:
let $\sigma$ be the $\phi$-symmetry at $p$. We fix any element $\gamma(s)$ in the image of $\gamma$, and consider the $\phi$-geodesic 
$$\tilde{\gamma}:t\mapsto \gamma(t+s/2).$$
and the $\phi$-symmetry $\tilde{\sigma}$ at $\gamma(s/2)$. We have that for every $t$
\[
 \tilde{\sigma}(\tilde{\gamma}(t))=\tilde{\gamma}(-t).
\]
In particular 
\[
 \tilde{\sigma}(\tilde{\gamma}(s/2))=\tilde{\gamma}(-s/2)=\gamma(0), 
\]
and hence $\tilde{\sigma}({\gamma}(s))=\gamma(0)=p$ and 
\[
 \gamma(s)\in G \cdot p.
\]
This implies that $\mathcal{D}_p \subset T_p (G\cdot p)$. As the contact distribution $\mathcal{D}$ is $G$-invariant, $\mathcal{D}_q \subset T_q (G\cdot p)$ for every $q\in G \cdot p$. Moreover $\mathcal{D}$ is completely nonintegrable and of codimension $1$, then $T_q (G\cdot p)=T_qM$ and as $M$ is connected, $G \cdot p =M$.
 \end{proof}

 \begin{remark} 
  Note that the proof of Theorem~\ref{th:hom} holds also in the Riemannian setting, providing an alternative and less technical proof of the homogeneity of Sasakian $\phi$-symmetric spaces, cf. \cite[Theorem~7]{BLAIR-VANHECKE}. 
 \end{remark}
 
 \vspace{0.2 cm}
 
 In the next theorem we consider the Boothby--Wang fibration $$\pi:M\rightarrow M/\xi$$ of \emph{$K$-para-contact} manifolds (namely para-contact metric manifolds whose Reeb vector field is Killing) and of para-Sasakian manifolds, finding some useful relations between the curvature tensor of the total and of the base space of the fibration. Then, in Theorem~\ref{th:characterization}, we characterize para-Sasakian locally $\phi$-symmetric spaces using
 \begin{enumerate}
  \item The curvature tensor field of the Levi-Civita connection $\nabla$ and of the \emph{canonical para-contact connection} $\widetilde{\nabla}$, defined in \cite[Section 4]{zamkovoy} by
\[\widetilde{\nabla}_XY:=\nabla_XY+\eta(X)\phi(Y)+\eta(Y)\phi(X)+d\eta(X,Y)\xi, \]
with $X,Y$ vector fields on $M$.
 \item The Boothby--Wang fibration: we will see that para-Sasakian $\phi$-symmetric spaces correspond to \emph{para-Hermitian symmetric} spaces. 
 \end{enumerate}

  \vspace{0.3 cm}
 We first recall the correspondence between \emph{regular contact} and symplectic manifolds given by the Boothby--Wang theorem (for details see \cite{BW}, or also \cite[Section 7.2]{book:geiges}):

   \vspace{0.1 cm}

A contact manifold $(M^{2n+1},\eta)$ is called \emph{regular} if the associated Reeb vector field $\xi$ is \emph{regular},
namely if for every point $p\in M$ there is an open neighborhood of $p$ that is pierced at most once by any integral curve of $\xi$.

\vspace{0.2 cm}
Note that, if $(M,\eta)$ is a connected regular contact manifold, the orbit space $M/\xi$ is a smooth manifold (see for example \cite[Theorem~VIII]{palais}). 
We have moreover that, if $M$ is contact homogeneous, then it is automatically regular (see \cite[Theorem~4]{BW}).

\vspace{0.2 cm}

The Boothby--Wang theorem says that the contact form $\eta$ of a connected, either homogeneous or compact regular contact manifold $(M,\eta)$ is, up to rescaling, the connection form of a principal $G$-bundle
\[
 \pi:M\rightarrow M/\xi,
\]
with $G=S^1$ or $G=\R$ (according to the circumstance that the integral curves of $\xi$ are compact or not)
over the symplectic manifold $(M/\xi,\omega)$ where $\omega$ is the curvature form of $\eta$.
Conversely, let $(B,\omega)$ be a symplectic manifold with $[\omega]$ an integral cohomology class, then the associated principal $G$-bundle 
\[
 \pi:M\rightarrow B
\]
with Euler class $[\omega]$, has a connection $1$-form $\eta$ which is a regular contact form on $M$, with curvature form $\omega$.

\vspace{0.3 cm}
 The proof of the next two theorems in this sections works similarly as in the Sasakian setting (cf. \cite{ogiue}, \cite{takahashi}). Moreover a part of the proof of Theorem~\ref{th:R-R*} is contained in \cite[Section~2]{KaneyukiWilliams}. We include the proof here for completeness.

\begin{theorem}\label{th:R-R*}
 Let $(M, \phi,\xi,\eta,g)$ be either a compact regular, or a homogeneous, $K$-para-contact manifold. Then, 
 the tensors $\phi$, $g$ induce on the base space $M/\xi$ of the Boothby--Wang fibration $\pi:M\rightarrow  M/\xi$, an almost para-K\"ahler structure $(I,\bar{g})$ given by
 \begin{equation}\label{Induced para-complex}
  I_p(X)=d \pi_q(\phi X^*), \quad \bar{g}_p(X,Y)=g_q(X^*,Y^*),
 \end{equation}
for all vector fields $X,Y$ on $M/\xi$, $p\in M/\xi$ and $q\in M$ such that $\pi(q)=p$, and where $X^*,Y^*$ denote the lift of $X$ and $Y$ with respect to the connection form $\eta$. 
Let $\nabla$, $\bar{\nabla}$, and $R$, $\bar{R}$, denote respectively the Levi-Civita connections and the curvature tensors of $g$ and $\bar{g}$, then
\begin{equation}\label{nabla}
\begin{aligned}
 (\bar{\nabla}_XY)^* = \nabla_{X^*}Y^*-\frac{1}{2}\eta([X^*,Y^*])\xi= \phi^2(\nabla_{X^*}Y^*),
\end{aligned}
\end{equation}
\begin{equation}\label{curvature}
\begin{aligned}
 (\bar{R}(X,Y,Z))^*=&\phi^2\left( R(X^*,Y^*,Z^*)+g(Z^*,\phi Y^*)\phi{X^*}\right.\\
 &\left.-g(Z^*,\phi X^*)\phi{Y^*} - 2g(\phi X^*, Y^*)\phi{Z^*}\right),
\end{aligned}
\end{equation}
for all vector fields $X,Y,Z$ on $M/\xi$.

In particular if $(M, \phi,\xi,\eta,g)$ is para-Sasakian, then $(M/\xi,I,\bar{g})$ is para-K\"ahler, and for all vector fields $X,Y,Z,W$ on $M/\xi$
\begin{equation}\label{curvature paraS}
 \begin{aligned}
  (\bar{R}(X,Y,Z))^*=&  R(X^*,Y^*,Z^*)+g(Z^*,\phi Y^*)\phi{X^*}-g(Z^*,\phi X^*)\phi{Y^*} \\
                     &-2g(\phi X^*, Y^*)\phi{Z^*},
 \end{aligned}
\end{equation}
\begin{equation}\label{phi2nablaR}
 \left((\bar{\nabla}_X\bar{R})(Y,Z,W)\right)^*=\phi^2\left((\nabla_{X^*}{R})(Y^*,Z^*,W^*)\right).
\end{equation}
\end{theorem}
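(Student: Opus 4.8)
The plan is to set up the Boothby--Wang fibration $\pi:M\to M/\xi$ exactly as in the Sasakian case and to establish the formulas by O'Neill-type computations relating $\nabla$ on $M$ to $\bar\nabla$ on $M/\xi$. First I would check that $(I,\bar g)$ is well defined, i.e.\ independent of the choice of $q\in\pi^{-1}(p)$: since $\xi$ is Killing and $\eta$ is the connection form, the flow of $\xi$ preserves $g$ and $\phi$ on the horizontal distribution, so $g(X^*,Y^*)$ and $d\pi(\phi X^*)$ descend. Compatibility $I^2=\id$, $\bar g(IX,Y)+\bar g(X,IY)=0$ follows fiberwise from \eqref{eq:paracontactmatric1}--\eqref{eq:paracontactmatric3} restricted to $\mathcal D$; and $d\bar\omega=0$ is automatic because $\pi^*\bar\omega=d\eta$ and $\pi^*$ is injective on forms, so $(I,\bar g)$ is almost para-K\"ahler. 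The key structural identity is that for horizontal lifts, $[X^*,Y^*]=[X,Y]^*-\eta([X^*,Y^*])\xi$ with $\eta([X^*,Y^*])=-d\eta(X^*,Y^*)=-g(X^*,\phi Y^*)$; this is the source of every correction term below.

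Next I would derive \eqref{nabla}. Using the Koszul formula for $\bar\nabla$ on $M/\xi$ and for $\nabla$ on $M$ applied to horizontal lifts, the horizontal components of all six Koszul terms match, because $\bar g(X,Y)\circ\pi=g(X^*,Y^*)$ and the Lie-bracket terms differ only by vertical parts. Hence $(\bar\nabla_XY)^*$ equals the horizontal part of $\nabla_{X^*}Y^*$, which is $\phi^2(\nabla_{X^*}Y^*)$; computing the vertical part via $g(\nabla_{X^*}Y^*,\xi)=-g(Y^*,\nabla_{X^*}\xi)$ and \eqref{nabla xi} (or, in the merely $K$-para-contact case, directly from $d\eta$) gives the middle expression $\nabla_{X^*}Y^*-\tfrac12\eta([X^*,Y^*])\xi$. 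For \eqref{curvature} I would plug \eqref{nabla} into $\bar R(X,Y,Z)=\bar\nabla_X\bar\nabla_YZ-\bar\nabla_Y\bar\nabla_XZ-\bar\nabla_{[X,Y]}Z$, lift everything, and carefully track the vertical corrections: each $\bar\nabla$ contributes a $-\tfrac12\eta([\cdot,\cdot])\xi$ term, and differentiating these along $X^*$ (resp.\ $Y^*$) produces, via \eqref{nabla xi} and $\nabla_X\xi=-\phi X$, precisely the terms $g(Z^*,\phi Y^*)\phi X^*-g(Z^*,\phi X^*)\phi Y^*-2g(\phi X^*,Y^*)\phi Z^*$ after applying $\phi^2$ to project onto $\mathcal D$. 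The bracket term $[X,Y]^*$ versus $[X^*,Y^*]$ accounts for the factor $2$ in the last summand (the vertical part of $[X^*,Y^*]$ feeds $\nabla_{(\cdot)}\xi$).

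Finally, the para-Sasakian specialization: if $M$ is para-Sasakian then $(M/\xi,I,\bar g)$ is para-K\"ahler since it is already para-Hermitian and $\bar\omega$ is closed (equivalently $\nabla I=0$ on the base can be read off from \eqref{nablaphi}); and formula \eqref{curvature paraS} is just \eqref{curvature} with $\phi^2$ dropped, which is legitimate because the right-hand side of \eqref{curvature} is already horizontal—one checks that $R(X^*,Y^*,Z^*)+g(Z^*,\phi Y^*)\phi X^*-g(Z^*,\phi X^*)\phi Y^*-2g(\phi X^*,Y^*)\phi Z^*$ has vanishing $\xi$-component using \eqref{R} and $\eta(X^*)=0$. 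For \eqref{phi2nablaR} I would differentiate \eqref{curvature paraS}: since $(\bar\nabla_X\bar R)(Y,Z,W)=\bar\nabla_X(\bar R(Y,Z,W))-\bar R(\bar\nabla_XY,Z,W)-\cdots$, lifting via \eqref{nabla} turns each $\bar\nabla$ into $\phi^2\nabla_{X^*}$, and the correction terms in \eqref{curvature paraS} are $\nabla$-parallel modulo $\xi$ in the combination that cancels—more precisely, applying $\nabla_{X^*}$ to the extra terms and using \eqref{nablaphi} reproduces exactly the terms obtained by lifting $\bar R(\bar\nabla_X Y,\dots)$, so only $\phi^2((\nabla_{X^*}R)(Y^*,Z^*,W^*))$ survives. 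The main obstacle is the bookkeeping in \eqref{curvature}: keeping track of which vertical pieces survive the two covariant derivatives and produce the precise coefficients (especially the $-2$), since a sign or factor error there propagates into \eqref{curvature paraS} and \eqref{phi2nablaR}; I would organize this by first recording the auxiliary identities $\eta([X^*,Y^*])=-g(X^*,\phi Y^*)$, $\nabla_{X^*}\xi=-\phi X^*$, and $\nabla_{X^*}(\phi Y^*)=\phi(\nabla_{X^*}Y^*)+($ terms from \eqref{nablaphi}$)$ before touching the curvature at all.
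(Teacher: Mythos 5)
Your proposal is correct and follows essentially the same route as the paper: well-definedness via $\xi$-invariance, the Koszul formula to obtain \eqref{nabla}, substitution into the curvature definition with careful tracking of the vertical corrections $-\tfrac12\eta([\cdot,\cdot])\xi$ to get \eqref{curvature}, dropping $\phi^2$ in the para-Sasakian case because $\eta(R(X^*,Y^*,Z^*))=0$, and differentiating to obtain \eqref{phi2nablaR}. The only (cosmetic) divergence is in establishing integrability of $I$: the paper computes $N_I=d\pi(N_\phi(X^*,Y^*))=0$ directly, whereas you invoke $\bar\nabla I=0$ read off from \eqref{nablaphi} and \eqref{nabla} — which is equally valid, but note that ``already para-Hermitian'' cannot serve as the justification on its own, since only the \emph{almost} para-Hermitian property has been established at that point.
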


\begin{proof}
Observe that, since $\phi$ and $g$ are invariant under the flow of $\xi$, then $I$ and $\bar{g}$ are well defined. Moreover for every vector field $X$ on $M/\xi$,
 \begin{equation*}
  I^2(X)=I(d\pi(\phi X^*))=d \pi (\phi^2 X^*)=d\pi (X^*)=X,
 \end{equation*}
and hence, as the eigensubbundles $\mathcal{D}^+$ and $\mathcal{D}^-$ corresponding to the eigenvalues $1$ and $-1$ of $\phi$, have both dimension $n$, $I$ is an almost paracomplex structure on $M/\xi$. For all $X,Y$ vector fields  on $M/\xi$
\begin{equation*}
\begin{aligned}
  \bar{g}(IX,IY)&=\bar{g}(d\pi(\phi X^*), d\pi(\phi Y^*))=g(\phi X^*, \phi Y^*)=-g(X^*,Y^*)\\
                &=-\bar{g}(X,Y),
\end{aligned}
\end{equation*}
and thus $(M/\xi, I,\bar{g})$ is an almost para-Hermitian manifold. As the fundamental $2$-form $\omega$ of $M$ is closed and it is the pullback of 
\[                                                                                             
\Omega(X,Y):=\bar{g}(X,IY)                                                                                                                    \]
 via $\pi$, we have that $d \Omega =0$ and hence $(M/\xi, I,\bar{g})$ is an almost para-K\"ahler manifold.
We prove now \eqref{nabla}. Let $X,Y,Z$ be vector fields on $M/\xi$. Since $\xi$ is Killing,
\begin{equation*}
\begin{aligned}
 2g(\nabla_{X^*}Y^*,\xi)=&X^*(g(Y^*,\xi))+Y^*(g(X^*,\xi))-\xi(g(X^*,Y^*))\\
                         &-g(X^*,[Y^*,\xi])+g(Y^*,[X^*,\xi],)+g(\xi,[X^*,Y^*])\\
                        =& g(\xi,[X^*,Y^*])=\eta([X^*,Y^*]),
\end{aligned}
\end{equation*}
and hence 
\[
 \phi^2(\nabla_{X^*}Y^*)=\nabla_{X^*}Y^*-\frac{1}{2}\eta[X^*,Y^*]\xi.
\]
We also have that
 \begin{equation*}
\begin{aligned}
2\bar{g}(d \pi(\nabla_{X^*}Y^*),Z)=& 2g(\nabla_{X^*}Y^*,Z^*)\\
                           =&X^*(g(Y^*,Z^*))+Y^*(g(X^*,Z^*))-Z^*(g(X^*,Y^*))\\
                            &-g(X^*,[Y^*,Z^*])+g(Y^*,[X^*,Z^*],)+g(Z^*,[X^*,Y^*])\\
                           =&2\bar{g}(\bar{\nabla}_XY,Z).
\end{aligned}
\end{equation*}
Then $d \pi(\nabla_{X^*}Y^*)=\bar{\nabla}_XY$ and 
$$(\bar{\nabla}_XY)^*=\nabla_{X^*}Y^*-\frac{1}{2}\eta([X^*,Y^*])\xi=\phi^2(\nabla_{X^*}Y^*).$$
Using \eqref{nabla xi} and the fact that $\xi$ commutes with the horizontal lift $W^*$ of any vector field $W$ on $M$ (as $W^*$ is invariant under the action of the structure group of $\pi:M\rightarrow M/\xi$), we have  
\begin{equation*}
 \begin{aligned}
  (\bar{\nabla}_{[X,Y]}Z)^*=&\phi^2({\nabla}_{[X,Y]^*}Z^*)\\
                           =&\phi^2\left( {\nabla}_{[X^*,Y^*]}Z^* -\eta[X^*,Y^*] \nabla_{\xi}Z^*\right)\\
                           =&\phi^2\left( {\nabla}_{[X^*,Y^*]}Z^* -2g(\nabla_{X^*}Y^*,\xi) (\nabla_Z^*{\xi}+[\xi,Z^*])\right)\\
                           =&\phi^2\left( {\nabla}_{[X^*,Y^*]}Z^* -2(g(Y^*,\nabla_{X^*}\xi)-X^*(g(Y^*,\xi))) \phi Z^*)\right)\\
                           =& \phi^2\left( {\nabla}_{[X^*,Y^*]}Z^* +2g(Y^*,\phi X^*)\phi Z^* \right),
 \end{aligned}
\end{equation*}
\begin{equation*}
 \begin{aligned}
  (\bar{\nabla}_X\bar{\nabla}_YZ)^*=&\phi^2({\nabla}_{X^*}(\bar{\nabla}_YZ)^*)\\
                                   =&\phi^2\left({\nabla}_{X^*}{\nabla}_{Y^*}Z^*-\frac{1}  {2}X^*(\eta[Y^*,Z^*])\xi-\frac{1}{2}\eta[Y^*,Z^*]{\nabla}_{X^*}\xi\right)\\
                                   =& \phi^2\left({\nabla}_{X^*}{\nabla}_{Y^*}Z^*+g(Z^*,\phi Y^*)\phi X^*\right),
 \end{aligned}
\end{equation*}
and analogously \[
              (\bar{\nabla}_Y\bar{\nabla}_XZ)^*=\phi^2\left({\nabla}_{Y^*}{\nabla}_{X^*}Z^*+g(Z^*,\phi X^*)\phi Y^*\right).
             \]
Thus, equation \eqref{curvature} follows.

 Now we assume that $M$ is para-Sasakian. By \eqref{RR} we have:  
 \begin{equation*}
 \eta(R(X^*,Y^*,Z^*))=g(R(X^*,Y^*,\phi^2 Z^*),\xi)=-g(R(X^*,Y^*,\phi Z^*),\phi\xi)=0.
 \end{equation*}
 Then,
$$ \phi^2(R(X^*,Y^*,Z^*))=R(X^*,Y^*,Z^*)-\eta(R(X^*,Y^*,Z^*))\xi=R(X^*,Y^*,Z^*),$$
and hence using equation \eqref{curvature} we obtain \eqref{curvature paraS}.
 To prove that $I$ is integrable, we consider any $X,Y$ vector fields on $M/\xi$. Then
\begin{equation*}
\begin{aligned}
 N_I(X,Y)=&[IX,IY]+I^2[X,Y]-I[IX,Y]-I[X,IY]\\
         =&d \pi\left( [\phi X^*,\phi Y^*] + \phi^2[X,Y]-\phi[d\pi(\phi X^*),Y]^*-\phi[d\pi(\phi X^*),Y]^* \right)\\
         =& d \pi(N_{\phi}(X^*,Y^*))=0.
\end{aligned}
\end{equation*}
Finally, a straightforward computation using \eqref{nablaphi}, \eqref{nabla}, \eqref{curvature}, gives \eqref{phi2nablaR}.
\end{proof}

\begin{theorem}\label{th:characterization}
 Let $(M,\phi,\xi,\eta,g)$ be a para-Sasakian manifold. Let $R,\tilde{R}$ denote respectively the curvature tensors of the Levi-Civita connection $\nabla$ and of the canonical para-contact connection $\widetilde{\nabla}$.
 The following conditions are equivalent:
 \begin{enumerate}[(i)]
 \item $M$ is locally $\phi$-symmetric.
 \item For every $x\in M$, consider an open neighborhood $U$ of $x$ such that $(U,\eta|_U)$ is regular. Then the para-Sasakian structure on $U$ obtained by restriction from the given structure on $M$, induces on the base space of the local fibration  
 $$\pi:U\rightarrow U/\xi,$$
a para-Hermitian symmetric structure.
 \item For all horizontal vector fields $X,Y,Z,W$,
 $$\phi^2((\nabla_XR)(Y,Z,W))=0.$$ 
  \item  For all vector fields $X,Y,Z,W$,
  \begin{equation*}
          \begin{aligned}
           (\nabla_{X}R)(Y,Z,W)=&\left(g(X,Y)g(\phi Z,W)-g(X,Z)g(\phi Y,W)\right.\\
  &\left.-g(\phi R(Y,Z,X),W)\right)\xi \\
  &+ \eta(Y)\left( -g(W,\phi X)Z+g(W,Z)\phi X -R(Z,\phi X,W) \right)\\
  &+ \eta(Z)\left( g(W,\phi X)Y-g(W,Y)\phi X +R(Y,\phi X,W) \right)\\
  &+ \eta(W)\left( g(Z,\phi X)Y-g(Y,\phi X) Z+R(Y,Z,\phi X) \right).
          \end{aligned}
         \end{equation*}
  \item $\widetilde{\nabla}\widetilde{R}=0$.
 \end{enumerate}
\end{theorem}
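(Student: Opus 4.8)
The plan is to establish the equivalences $(i)\Leftrightarrow(ii)$, $(ii)\Leftrightarrow(iii)$, $(iii)\Leftrightarrow(iv)$ and $(iii)\Leftrightarrow(v)$, using as pivot condition $(iii)$ (equivalently, the para-Hermitian symmetry of the base of the local Boothby--Wang fibration). The two tools are the local fibration $\pi\colon U\to U/\xi$ over a regular neighbourhood $U$ of a point $x$, with the induced para-K\"ahler structure $(I,\bar g)$ and the curvature relations of Theorem~\ref{th:R-R*}, and the identities of Lemma~\ref{lemma:nablaRxi}. I will use freely that the horizontal lift $X\mapsto X^{*}$ is a fibrewise isomorphism onto $\mathcal{D}$, that basic horizontal vector fields span $\mathcal{D}$ at every point, that $g=\pi^{*}\bar g+\eta\otimes\eta$, and that $\phi$ is $g$-skew (immediate from \eqref{eq:paracontactmatric3} and the skew-symmetry of $d\eta$).

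For $(i)\Rightarrow(ii)$ I would descend automorphisms: an automorphism $\sigma$ of the para-contact metric structure on $U$ with $\sigma(x)=x$ fixes $\xi$ and $\eta$, hence maps $\xi$-orbits to $\xi$-orbits and descends to a local diffeomorphism $\bar\sigma$ of $U/\xi$ fixing $p=\pi(x)$; since $\sigma$ preserves $\phi$ and $g$, $\bar\sigma$ preserves $I$ and $\bar g$, and if moreover $\sigma$ is a $\phi$-geodesic symmetry at $x$---i.e.\ $(d\sigma)_{x}|_{\mathcal{D}_{x}}=-\id$---then $(d\bar\sigma)_{p}=-\id$, so $\bar\sigma$ is the geodesic symmetry of the para-K\"ahler manifold $U/\xi$ at $p$; since $I$ is $\bar\nabla$-parallel and $(d\bar\sigma)_{p}$ commutes with $I_{p}$, the parallel tensors $\bar\sigma^{*}I$ and $I$ agree at $p$ hence coincide, so $U/\xi$ is para-Hermitian symmetric. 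For $(ii)\Rightarrow(i)$ I would lift the geodesic symmetry $\bar\sigma$ of $U/\xi$ at $p$: it preserves $\bar g$, $I$ and the fundamental $2$-form $\Omega$ (with $\pi^{*}\Omega=d\eta$), so for a $G$-equivariant local lift $\sigma_{0}$ one has $\sigma_{0}^{*}\eta-\eta=\pi^{*}\beta$ with $\beta$ closed; after shrinking $U$ so the base is contractible, $\beta$ is exact, and composing $\sigma_{0}$ with an appropriate gauge transformation along $\xi$ and a constant $\xi$-translation yields a lift $\sigma$ with $\sigma^{*}\eta=\eta$ and $\sigma(x)=x$. Such $\sigma$ preserves $\xi$, $\eta$, $g=\pi^{*}\bar g+\eta\otimes\eta$ and $\phi$ (which is determined by $I$ on $\mathcal{D}$ and by $\phi\xi=0$), hence is an automorphism of the para-contact metric structure, and $(d\sigma)_{x}|_{\mathcal{D}_{x}}=-\id$ since $(d\bar\sigma)_{p}=-\id$, so $\sigma$ is a $\phi$-geodesic symmetry at $x$.

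For $(ii)\Leftrightarrow(iii)$: a para-K\"ahler manifold is para-Hermitian (locally) symmetric if and only if it is pseudo-Riemannian locally symmetric (the geodesic symmetry then preserves the parallel $I$, as above), i.e.\ if and only if $\bar\nabla\bar R=0$; by \eqref{phi2nablaR} and injectivity of the horizontal lift this is equivalent to $\phi^{2}\big((\nabla_{X}R)(Y,Z,W)\big)=0$ for all horizontal $X,Y,Z,W$, which is $(iii)$. For $(iii)\Leftrightarrow(iv)$: let $T$ denote the difference of the two sides of the identity in $(iv)$; $T$ is a tensor, and using \eqref{nabla xi}, \eqref{R}, \eqref{R2} and \eqref{eq:nablaRxi}--\eqref{eq:nablaRxi3} I would check that $T$ vanishes whenever one of its arguments equals $\xi$---the first line of $(iv)$ reproduces $g\big((\nabla_{X}R)(Y,Z,W),\xi\big)$ via the pair symmetry of $\nabla R$ and \eqref{eq:nablaRxi}, while the $\eta(Y)$-, $\eta(Z)$- and $\eta(W)$-lines reproduce the $\xi$-contributions in the other three slots via \eqref{eq:nablaRxi3}, \eqref{eq:nablaRxi2}, \eqref{eq:nablaRxi}---and that $T$ takes values orthogonal to $\xi$; hence $T$ is ``totally horizontal''. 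Consequently, on horizontal inputs $\phi^{2}T=T$, while $\phi^{2}$ annihilates the $\xi$-valued and $\eta$-weighted parts of the right-hand side of $(iv)$, so $\phi^{2}\big((\nabla_{X}R)(Y,Z,W)\big)=T(X,Y,Z,W)$ on horizontal vector fields; thus $T\equiv0$ if and only if $(iii)$ holds.

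For $(iii)\Leftrightarrow(v)$ the key point is that $\widetilde{\nabla}$ restricts, along basic horizontal lifts, to the lift of $\bar\nabla$: from \eqref{nabla}, from $\eta(X^{*})=\eta(Y^{*})=0$, $d\eta(X^{*},Y^{*})=g(X^{*},\phi Y^{*})$ and the $g$-skewness of $\phi$ one obtains $\widetilde{\nabla}_{X^{*}}Y^{*}=(\bar\nabla_{X}Y)^{*}$; combined with $\widetilde{\nabla}_{\xi}Z^{*}=0$ and $[X^{*},Y^{*}]=[X,Y]^{*}+\eta([X^{*},Y^{*}])\xi$ this yields $\widetilde{R}(X^{*},Y^{*})Z^{*}=(\bar R(X,Y)Z)^{*}$ and $\widetilde{R}(\xi,\cdot)=\widetilde{R}(\cdot,\cdot)\xi=0$, hence $(\widetilde{\nabla}_{X^{*}}\widetilde{R})(Y^{*},Z^{*},W^{*})=\big((\bar\nabla_{X}\bar R)(Y,Z,W)\big)^{*}$ and $\widetilde{\nabla}_{\xi}\widetilde{R}=0$; so $\widetilde{\nabla}\widetilde{R}=0$ if and only if $\bar\nabla\bar R=0$, i.e.\ if and only if $(iii)$. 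I expect the main obstacle to be the lifting step in $(ii)\Rightarrow(i)$: constructing a $G$-equivariant, $\eta$-preserving local lift of $\bar\sigma$ that fixes $x$ needs the gauge/contractibility bookkeeping sketched above, and it is there that the smallness of $U$ is genuinely used; the other equivalences reduce either to standard symmetric-space theory or to direct, if somewhat lengthy, tensor computations with Lemma~\ref{lemma:nablaRxi} and the formulas of Theorem~\ref{th:R-R*}.
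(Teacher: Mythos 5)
Your proposal is correct, but it closes the circle of equivalences along a genuinely different path than the paper. The paper proves the single cycle $(i)\Rightarrow(ii)\Rightarrow(iii)\Rightarrow(iv)\Rightarrow(v)\Rightarrow(i)$, whereas you use $(iii)$ (equivalently, symmetry of the base of the local Boothby--Wang fibration) as a hub. The two substantive divergences are these. First, the paper returns from $(v)$ to $(i)$ via the Kobayashi--Nomizu theorem on local affine transformations with prescribed differential, using that $\widetilde{\nabla}$ has parallel torsion and curvature; you instead prove $(ii)\Rightarrow(i)$ by lifting the geodesic symmetry $\bar\sigma$ of $U/\xi$ through the fibration, a prequantization-style argument ($\sigma_0^*\eta-\eta=\pi^*\beta$ with $\beta$ closed, then exact after shrinking, then gauge away). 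This works --- $\bar\sigma$ preserves the curvature form because it preserves $\bar g$ and $I$, the lift preserves $\phi$ because $\phi|_{\mathcal D}$ is the horizontal lift of $I$ and $g=\pi^*\bar g+\eta\otimes\eta$ --- and it makes the geometric content of the theorem (the correspondence with para-Hermitian symmetric spaces) do the work, at the price of the contractibility/gauge bookkeeping and of invoking the pseudo-Riemannian Cartan criterion ($\bar\nabla\bar R=0$ implies local geodesic symmetries are isometries) for $(iii)\Rightarrow(ii)$, which the paper never needs. Second, for $(v)$ the paper computes $\widetilde\nabla\widetilde R$ directly through the difference tensor $A=\widetilde\nabla-\nabla$ and the identities \eqref{R}, \eqref{R2}, \eqref{RR}; your observation that $\widetilde\nabla_{X^*}Y^*=(\bar\nabla_XY)^*$, $\widetilde\nabla\xi=0$ and $\widetilde\nabla_\xi Z^*=0$, so that $\widetilde R$ is the lift of $\bar R$ and $\widetilde\nabla\widetilde R=0\Leftrightarrow\bar\nabla\bar R=0$, is cleaner and more conceptual, and correctly identifies $\widetilde\nabla$ as the pullback of the base Levi-Civita connection. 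Be aware that your $(iii)\Leftrightarrow(iv)$ step does not actually save work: verifying that your difference tensor $T$ vanishes when an argument is $\xi$ and is $\xi$-orthogonal is exactly the computation the paper carries out with Lemma~\ref{lemma:nablaRxi} and equations \eqref{R}, \eqref{R2}, \eqref{eq:rphi2}, so that part must still be done in full.
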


 \begin{proof}
We show first that $(i) \Rightarrow (ii)$.
 Let $x$ be any point in $M$ and $U$ an open neighborhood of $x$ such that $(U,\eta|_{U})$ is regular. Let $\pi(p)$ be any element in $U/\xi$ and $\sigma$ be a $\phi$-symmetry at $p\in U$. Observe that, if $\pi(p)=\pi(q)$ then $\pi(\sigma(p))=\pi(\sigma(q))$: consider an integral curve 
$\alpha:[0,1]\rightarrow M$ of $\xi$ such that $\alpha(0)=p$ and $\alpha(1)=q$. Since $\sigma$ is an automorphism of the para-contact structure, it preserves $\xi$ and we have:
\[
(d\pi)_{\sigma(\alpha(t))}\circ (d\sigma)_{\alpha(t)}(\alpha'(t))=(d\pi)_{\sigma(\alpha(t))}\xi_{\sigma(\alpha(t))}=0.
\]
Hence $\pi(\sigma(p))=\pi(\sigma(q))$ and $\sigma$ induces a map $\widetilde{\sigma}:U/\xi\rightarrow U/\xi$,
\[
 \widetilde{\sigma}(\pi(p)):=\pi(\sigma(p)),
\]
that is a symmetry at $\pi(x)$ on $U/\xi$.
 
 The implication $(ii)\Rightarrow (iii)$ follows directly by equation~\eqref{phi2nablaR}. To prove $(iii) \Rightarrow (iv)$, we first consider $X,Y,Z,W$ horizontal vector fields.
 Then, by Lemma~\ref{lemma:nablaRxi} and basic properties of the curvature tensor, we have that
   \begin{equation}\label{eq:iv-horizontal}
    \begin{aligned}
     0=&\phi^2((\nabla_{X}R)(Y,Z,W))\\
     =&(\nabla_{X}R)(Y,Z,W)-\eta((\nabla_{X}R)(Y,Z,W))\xi\\
     =&(\nabla_{X}R)(Y,Z,W)-g(-(\nabla_XR)(Y,Z,\xi),W)\xi\\
     =& (\nabla_{X}R)(Y,Z,W)-\left(g(X,Y)g(\phi Z,W)-g(X,Z)g(\phi Y,W)\right.\\
  &\left.-g(\phi R(Y,Z,X),W)\right)\xi.
    \end{aligned}
   \end{equation}
 and hence $(iv)$ holds in this case. 
Now we consider $X,Y,Z,W$ any vector fields. Using \eqref{eq:iv-horizontal}, we have
 \begin{equation*}
  \begin{aligned}
   &(\nabla_{\phi^2 X}  R)(\phi^2 Y,\phi^2 Z, \phi^2 W)=\left(g(\phi^2 X,\phi^2 Y)g(\phi^3 Z, \phi^2 W)\right.\\
   &\left.-g(\phi^2 X,\phi^2 Z)g(\phi^3 Y,\phi^2 W)-g(\phi R(\phi^2 Y,\phi^2 Z,\phi^2 X),\phi^2 W)\right)\xi\\
   =&( g(\phi Z, W)(g(X,Y)-\eta(X)\eta(Y))-g(\phi Y, W)(g(X,Z)g(\eta(X)\eta(Z))\\
   &+g(R(\phi^2 Y,\phi^2 Z, \phi^2 X),\phi W))\xi,
  \end{aligned}
 \end{equation*}
and since
\begin{equation*}
 \begin{aligned}
  R&(\phi^2 Y,\phi^2 Z, \phi^2 X)=R(Y,Z,X)-\eta(X)R(Y,Z,\xi)-\eta(Z)R(Y,\xi,X)\\
  &+\eta(X)\eta(Z)R(Y,\xi,\xi)-\eta(Y)R(\xi,Z,X)+\eta(X)\eta(Y)R(\xi,Z,\xi)\\
  =&R(Y,Z,X)-\eta(X)(\eta(Y)Z-\eta(Z)Y)+\eta(Z)(g(Y,X)\xi-\eta(Z)Y)\\
  &+\eta(Z)\eta(X)\eta(Y)\xi-\eta(Z)\eta(X)Y+\eta(Y)(g(Z,X)\xi-\eta(X)Z)\\
  &-\eta(Z)\eta(X)\eta(Y)\xi+\eta(Y)\eta(X)Z\\
  =&R(Y,Z,X)+\eta(Z)\eta(X)Y-\eta(Y)\eta(X)Z\\
  &+\big(-\eta(Z)g(X,Y)+\eta(Y)g(Z,X)\big)\xi
 \end{aligned}
\end{equation*}
where for the second equality we used \eqref{R}, \eqref{R2}, we obtain
\begin{equation}\label{eq:rphi2}
 \begin{aligned}
  (&\nabla_{\phi^2 X}  R)(\phi^2 Y,\phi^2 Z, \phi^2 W)= \Big(g(\phi Z,W)g(X,Y)\\
  &-g(\phi Y, W)g(X,Z)+g(R(Y,Z,X),\phi W)\Big)\xi.
 \end{aligned}
\end{equation}
 On the other hand, as Lemma~5.3 of \cite{takahashi} holds also for para-Sasakian manifolds, we have that
 \[
   (\nabla_{\xi}R)(X_1,X_2,X_3)=0,
 \]
for all horizontal vector fields $X_1,X_2,X_3$. Then, using equations \eqref{eq:paracontactmatric1}, \eqref{R}, \eqref{R2} and Lemma~\ref{lemma:nablaRxi}, we have
\begin{equation*}
 \begin{aligned}
  (\nabla_{\phi^2 X}&  R)(\phi^2 Y,\phi^2 Z, \phi^2 W)=(\nabla_{X}  R)(\phi^2 Y,\phi^2 Z, \phi^2 W)\\
  =\,&(\nabla_{X}  R)(Y, Z, W)-\eta(W)(\nabla_{X}  R)(Y, Z, \xi)\\
  &-\eta(Z)(\nabla_{X}  R)(Y,\xi,W)+ \eta(Z)\eta(W)(\nabla_{X}  R)(Y, \xi, \xi)\\
  &-\eta(Y)(\nabla_{X}  R)(\xi,Z,W)+ \eta(Y)\eta(W)(\nabla_{X}  R)( \xi,Z, \xi)\\
  &+ \eta(Y)\eta(Z)(\nabla_{X}  R)( \xi,\xi,W)-\eta(Y)\eta(Z)\eta(W)(\nabla_{X}  R)( \xi,\xi,\xi)\\
  =\,&(\nabla_{X}  R)(Y, Z, W)\\
  &-\eta(W)\big(g(Z,X)Y-g(Y,\phi X)Z+R(Y,Z,\phi X)\big)\\
  &-\eta(Z)\big(g(W,\phi X)Y-g(Y,W)\phi X+R(Y,\phi X, W)\big)\\
  &-\eta(Y)\big(-g(W,\phi X)Z+g(Z,W)\phi X+R(\phi X,Z, W)\big).
 \end{aligned}
\end{equation*}
Finally, we compare this equation with equation \eqref{eq:rphi2} obtaining $(iv)$.
 
Now we prove that $(iv) \Rightarrow (v)$. We denote by $A$ the $(2,1)$ tensor 
\begin{equation*}
 \begin{aligned}
  A(X,Y):=&\widetilde{\nabla}_XY-\nabla_XY\\
 =&\eta(X)\phi(Y)+\eta(Y)\phi(X)+d\eta(X,Y)\xi.
 \end{aligned}
\end{equation*}
By \cite[Proposition~4.2, Theorem~4.10]{zamkovoy}, the structure tensors $(\phi,\xi,\eta,g)$ are $\tilde{\nabla}$-parallel and hence so is also $A$.
Then,
\begin{equation*}
 \begin{aligned}
  0=&(\tilde{\nabla}_VA)(X,Y)=\tilde{\nabla}_V(A(X,Y))-A(\tilde{\nabla}_VX,Y)-A(X,\tilde{\nabla}_VY)\\
  =& {\nabla}_V(A(X,Y))+A(V,(A(X,Y))-A({\nabla}_VX,Y)\\
   &-A(A(V,X),Y)-A(X,{\nabla}_VY)-A(X,A(V,Y)),
 \end{aligned}
\end{equation*}
and hence
\begin{equation*}
 \begin{aligned}
  \tilde{R}(X,Y,Z)=& \tilde{\nabla}_X\tilde{\nabla}_YZ-\tilde{\nabla}_Y\tilde{\nabla}_XZ-\tilde{\nabla}_{[X,Y]}Z \\
                  =&R(X,Y,Z)+A(X,{\nabla}_YZ)-\nabla_Y(A(X,Z))+A(\nabla_YX,Z)\\
                  &+A(X,A(Y,Z))-A(Y,A(X,Z))-A(Y,\nabla_XZ)\\
                  &+\nabla_X(A(Y,Z))-A(\nabla_XY,Z)\\
                  =&R(X,Y,Z)-A(X,A(Y,Z))+A(Y,A(X,Z))\\
                  &+A(A(X,Y),Z)-A(A(Y,X),Z)
 \end{aligned}
\end{equation*}
and 
\begin{equation*}
 \begin{aligned}
  (\tilde{\nabla}_V\tilde{R})(X,Y,Z)=& \tilde{\nabla}_V(\tilde{R}(X,Y,Z))- \tilde{R}(\tilde{\nabla}_VX,Y,Z)-\tilde{R}(X,\tilde{\nabla}_VY,Z)\\ 
  &- \tilde{R}(X,Y,\tilde{\nabla}_VZ)\\
  =&{\nabla}_V(\tilde{R}(X,Y,Z))+A(V,R(X,Y,Z))-R(\nabla_VX,Y,Z)\\
  &-R(A(V,X),Y,Z)-R(X,A(V,Y),Z)\\
  &-R(X,\nabla_VY,Z)-R(X,Y,\nabla_VZ)-R(X,Y,A(A(V,Z))\\
  &+ (\tilde{\nabla}_VA)(Y,A(X,Z))-(\tilde{\nabla}_VA)(X,A(Y,Z))\\
  & +(\tilde{\nabla}_VA)(A(X,Y),Z)-(\tilde{\nabla}_VA)(A(Y,X),Z)\\
  =& ({\nabla}_V {R})(X,Y,Z)+A(V,R(X,Y,Z))-R(A(V,X),Y,Z)\\
  &-R(X,A(V,Y),Z)-R(X,Y,A(V,Z)).
 \end{aligned}
\end{equation*}
By \eqref{R}, \eqref{RR} and the fact that 
\[
 A(X,Y)=\eta(X)\phi(Y)+\eta(Y)\phi(X)+d\eta(X,Y)\xi,
\]
we have
\begin{equation*}
 \begin{aligned}
 A(V,R(X,Y,Z))=&g(V,\phi R(X,Y,Z))\xi+\eta(V)\phi R(X,Y,Z)\\
 &+g(R(X,Y,Z),\xi)\phi V\\
 =&g\Big(V,R(X,Y,\phi Z)+g(Y,Z)\phi X-g(X,\phi Z)Y\\
 &+g(Y,\phi Z)X+g(X,Z)\phi Y+g(X,Z)\phi Y\Big)\xi\\
  &+\eta(V)\phi R(X,Y,Z)+g(\eta(Y)X-\eta(X)Y,Z)\phi V,\\
 R(A(V,X),Y,Z)=&g(V,\phi X)R(\xi,Y,Z)+\eta(V)R(\phi X,Y,Z)\\
 &+\eta(X)R(\phi V,Y,Z),\\
 R(X,A(V,Y),Z)=&g(V,\phi Y)R(X,\xi,Z)+\eta(V)R(X,\phi Y,Z)\\
 &+\eta(Y)R(X,\phi V,Z),\\
 R(X,Y,A(V,Z))=&g(V,\phi Z)R(X,Y,\xi)+\eta(V)R(X,Y,\phi Z)\\
 &+\eta(Z)R(X,Y,\phi V),\\
 \end{aligned}
\end{equation*}
and hence, using \eqref{R}, \eqref{R2} and \eqref{RR}, we obtain,
\[
 (\widetilde{\nabla}_V\widetilde{R})(X,Y,Z)=0.
\]
It remains to prove that $(v)\Rightarrow (i)$. Let $p\in M$ and consider the linear isomorphism $\alpha$ of $T_pM$, definite by 
\[
 \alpha(\xi_p)=\xi_p, \quad \alpha(e_i)=-e_i,
\]
where $\{e_1,\dots,e_{2n}\}$ is an orthonormal $\phi-$basis of $D_p$ (namely $\phi(e_i)=e_{n+i}$ for every $i\in\{1,\dots,n\}$).
Observe that 
\begin{equation*}
 \begin{aligned}
  \tilde{T}(X,Y)&=\tilde{\nabla}_XY-\tilde{\nabla}_YX-[X,Y]\\
  &= A(X,Y)-A(Y,X)
 \end{aligned}
\end{equation*}
and hence, as $A$ is $\tilde{\nabla}$-parallel (we know that the structure tensors of $M$ are $\tilde{\nabla}$-parallel by \cite[Proposition~4.2, Theorem~4.10]{zamkovoy}), we have that $\tilde{T}$ also is $\tilde{\nabla}$-parallel; we observe moreover that $\alpha$ preserves $\tilde{T}$ and $\tilde{R}$. Then, by Theorem~7.4 Chapter~6 of \cite{KN1}, there exists a local affine diffeomorphism $\sigma$ such that $d\sigma_p=\alpha$.
Since $\sigma$ preserves $\eta$ and $\phi$ (which are $\tilde{\nabla}$-parallel) at $p$ and commutes with the parallel transport (as it is an affine map), we have that it preserves $\eta$ and $\phi$ on a neighborhood of $p$, and then also $g$ and $\xi$ in the same neighborhood, giving a $\phi$-geodesic symmetry at $p$. 
\end{proof}
 
 \vspace{0.2 cm}
By the theorem above, we know that the curvature and torsion of the canonical para-contact connection $\tilde{\nabla}$ of a para-Sasakian locally $\phi$-symmetric space $(M,\phi,\xi,\eta,g)$ are $\tilde{\nabla}$-parallel. Moreover, by \cite[Proposition~4.2, Theorem~4.10]{zamkovoy} the structure tensors $\phi,\xi,\eta,g$ are also $\tilde{\nabla}$-parallel. Then Kiri\v{c}enko's theorem \cite{kiricenko} (see also \cite[Section 2.2, 2.3]{book:Calvaruso-Lopez}) applies, giving an alternative proof of the homogeneity of para-Sasakian locally $\phi$-symmetric spaces (cf. Theorem~\ref{th:hom}):
 
 \begin{theorem}\label{th:globally symmetric}
  Let $(M,\phi,\xi,\eta,g)$ be a simply connected, complete, para-Sasakian locally $\phi$-symmetric space. Then $M$ is globally $\phi$-symmetric and a homogeneous para-contact manifold.
 \end{theorem}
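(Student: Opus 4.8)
The plan is to extract from Theorem~\ref{th:characterization}, and from the computations inside its proof, the parallelism with respect to the canonical para-contact connection $\widetilde{\nabla}$ of all the relevant tensor fields, and then feed this into two classical ``local $\Rightarrow$ global'' mechanisms: an Ambrose--Singer type homogeneity theorem and an extension theorem for affine transformations. First I would record three facts. By the equivalence $(i)\Leftrightarrow(v)$ of Theorem~\ref{th:characterization}, the locally $\phi$-symmetric hypothesis is precisely $\widetilde{\nabla}\widetilde{R}=0$. As shown in the proof of $(iv)\Rightarrow(v)$, the difference tensor $A=\widetilde{\nabla}-\nabla$ is $\widetilde{\nabla}$-parallel, whence $\widetilde{T}(X,Y)=A(X,Y)-A(Y,X)$ gives $\widetilde{\nabla}\widetilde{T}=0$. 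And by \cite[Proposition~4.2, Theorem~4.10]{zamkovoy}, the structure tensors $\phi,\xi,\eta$ and the metric $g$ are $\widetilde{\nabla}$-parallel.

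For the homogeneity I would invoke the pseudo-Riemannian Ambrose--Singer theorem. Setting $S=\nabla-\widetilde{\nabla}=-A$, the connection $\widetilde{\nabla}$ is metric for $g$ and satisfies $\widetilde{\nabla}S=0$ together with $\widetilde{\nabla}\widetilde{R}=0$, i.e.\ $S$ is a homogeneous structure on $M$; hence on the connected, simply connected, complete manifold $M$ Kiri\v{c}enko's theorem \cite{kiricenko} (see also \cite[Sections~2.2, 2.3]{book:Calvaruso-Lopez}) yields a transitive group of isometries $G$, so $M=G/H$. Since moreover $\phi,\xi,\eta$ are $\widetilde{\nabla}$-parallel, the refined version of the theorem gives a presentation of $M$ for which these tensors are $G$-invariant, so $M=G/H$ is a homogeneous para-contact manifold.

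For the globally $\phi$-symmetric part I would fix $p\in M$ and reuse the construction in the proof of $(v)\Rightarrow(i)$: the linear isomorphism $\alpha$ of $T_pM$ preserves $\widetilde{T}$ and $\widetilde{R}$, so there is a local affine diffeomorphism $\sigma$ of $(M,\widetilde{\nabla})$ with $\sigma(p)=p$, $(d\sigma)_p=\alpha$, which is a $\phi$-geodesic symmetry at $p$. Because $\widetilde{T}$ and $\widetilde{R}$ are $\widetilde{\nabla}$-parallel and $M$ is complete and simply connected, the germ $\sigma$ extends to a global affine diffeomorphism of $(M,\widetilde{\nabla})$ (Chapter~VI of \cite{KN1}); being affine it commutes with $\widetilde{\nabla}$-parallel transport, so, fixing $p$ and preserving $\phi,\xi,\eta,g$ there, it preserves them everywhere and is a global automorphism of the para-contact metric structure. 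Finally, from $\phi\xi=0$ (apply \eqref{eq:paracontactmatric3} with $Y=\xi$ and use $\xi\in\ker d\eta$) we get $\nabla_\xi\xi=-\phi\xi=0$, so the integral curves of $\xi$ are $\nabla$-geodesics and, $M$ being complete, are defined for all time; thus $\xi$ generates a $1$-parameter group of global transformations, and $M$ is globally $\phi$-symmetric.

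The step I expect to be the main obstacle is the global extension of the local $\phi$-geodesic symmetry in the third paragraph: this genuinely uses completeness, simple connectedness and the parallelism of $\widetilde{T},\widetilde{R}$, the set of points to which $\sigma$ extends being shown open and closed by a monodromy argument along broken geodesics (the parallel torsion and curvature making $\widetilde{\nabla}$ analytic in normal coordinates, so that affine maps are rigid). One should also be careful that the Ambrose--Singer/Kiri\v{c}enko theorem is used in its pseudo-Riemannian form, since $g$ has signature $(n+1,n)$ --- which is precisely the framework of \cite{book:Calvaruso-Lopez}.
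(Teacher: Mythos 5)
Your proposal follows essentially the same route as the paper: the paper's entire argument is the paragraph preceding the theorem, which notes that $\widetilde{\nabla}\widetilde{R}=0$ and $\widetilde{\nabla}\widetilde{T}=0$ by Theorem~\ref{th:characterization}, that the structure tensors are $\widetilde{\nabla}$-parallel by \cite{zamkovoy}, and then invokes Kiri\v{c}enko's theorem, exactly as you do. Your write-up is in fact more detailed than the paper's (in particular on the global extension of the local symmetries and on the completeness of $\xi$), and the technical point you flag about extending local affine maps is left implicit in the paper as well.
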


\begin{example}
The homogeneous space 
$SO(n+1,1)/SO(n)$ 
is a para-Sasakian space: by \cite[Theorem~17]{loiudice-lotta} it admits a contact metric $(\kappa,\mu)$ structure $(\varphi,\xi,\eta,g)$, and hence by \cite[Theorem~3.1]{mino-carriazo-molina} it admits a para-Sasakian structure, compatible with the given contact form $\eta$. 
We observe that the base space of the Boothby--Wang fibration
$$SO(n+1,1)/SO(n) \rightarrow SO(n+1,1)/(SO(n)\times SO(1,1))$$
is para-Hermitian symmetric (cf. \cite{KaneyukiKozai2}, Section~4), and hence by Theorem~\ref{th:characterization},
$$SO(n+1,1)/SO(n)$$
is a para-Sasakian $\phi$-symmetric space.
\end{example}

\section{Semisimple para-Sasakian $\phi$-symmetric spaces}\label{sec:semisimple}
In this section we construct semisimple, para-Sasakian $\phi$-symmetric manifolds fibering over semisimple para-Hermitian symmetric spaces.

\vspace{0.3 cm}
Let $(N,I,g)$ be a para-Hermitian symmetric space, and $G$ the connected component of the identity in the authomorphism group $Aut(N,I,g)$. We have that $G$ acts transitively on $N$ and hence $N$ is diffeomorphic to the coset space $G/H$, where $H\subset G$ denotes the isotropy subgroup at a point $o\in N$.
Let $\mathfrak{g}$, $\mathfrak{h}$ be the Lie algebras respectively of $G$ and $H$, and $\sigma$ be the derivative of the involution of $G$ --that we denote again with $\sigma$-- defined by
\[
 \sigma: G\rightarrow G; \quad a\mapsto s_o \circ a \circ s_o,
\]
with $s_o\in Aut(N)$ the symmetry at $o$. We have that the involutive automorphism $\sigma$ of $\mathfrak{g}$ fixes $\mathfrak{h}$, and the $\sigma$-eigenspace decomposition of $\mathfrak{g}$ is
\[
 \mathfrak{g}= \mathfrak{h}\oplus \mathfrak{n},
\]
where $\mathfrak{n}\cong T_oN$.
Observe that the involutive automorphism $\sigma$ of $\mathfrak{g}$,
the linear authomorphism $I_o$ and the nondegenerate bilinear symmetric form $\langle\;,\;\rangle$ induced respectively by $I$ and $g$, satisfy
\begin{enumerate}[(I)]
 \item $I^2=id$,\label{I}\label{pagref}
 \item $I$ commutes with $ad_W|_\mathfrak{n}$ for every $W\in  \mathfrak{h}$,\label{II}
 \item $\langle IX,Y\rangle + \langle X,IY\rangle=0$, for every $X,Y\in \mathfrak{n}$,\label{III}
 \item $\langle ad_X Y_1,Y_2\rangle + \langle Y_1, ad_X Y_2\rangle=0$, for every $X\in \mathfrak{h}$, $Y_1,Y_2\in \mathfrak{n}$;\label{IV}
\end{enumerate}
namely $(\mathfrak{g},\mathfrak{h},\sigma, I_o,\langle\;,\;\rangle)$ is a \emph{para-Hermitian symmetric system}.

On the other hand, if $(\mathfrak{g},\mathfrak{h},\sigma, I_o,\langle\;,\;\rangle)$ is a para-Hermitian symmetric system, we can consider the simply connected Lie group $G$ with Lie algebra $\mathfrak{g}$, and the connected Lie subgroup $H$ with $Lie(H)=\mathfrak{h}$. We have that $G/H$ together with the $G$-invariant tensors $I$ and $g$, naturally determined respectively from $I_o$ and $\langle\;,\;\rangle$, is a para-Hermitian symmetric space whose associated para-Hermitian symmetric system is $(\mathfrak{g},\mathfrak{h},\sigma, I_o,\langle\;,\;\rangle)$.
For more details on para-Hermitian symmetric spaces we refer the reader to \cite{KaneyukiKozai2}.

\vspace{0.3 cm}
A para-Hermitian symmetric space $(G/H,I,g)$ is called \emph{simple} (or respectively \emph{semisimple}) if the Lie group $G$ is simple (respectively semisimple).

\vspace{0.1 cm}
We remark that, in the Riemannian setting the isotropy group $H$ of a simple Hermitian symmetric spaces $G/H$ has one-dimensional center, and the orthogonal complement in $\mathfrak{h}=Lie(H)$ of a generator of the center $\mathfrak{z(h)}$ of $\mathfrak{h}$ with respect to the Killing form $B$ of $G$, is the commutator of $\mathfrak{h}$. Hence the Lie subgroup $K\subset H$ associated to the Lie algebra $[\mathfrak{h},\mathfrak{h}]$ is closed in $G$, and the Hermitian structure on $G/H$ induces naturally, via the submersion $\pi:G/K\rightarrow G/H$, a Sasakian $\phi$-symmetric structure on the manifold $G/K$ (see \cite{KowalskiJimenez}). 
In contrast, on a simple para-Hermitian symmetric space, the center of the isotropy group $H$ can be either one- or two-dimensional, corresponding to the simple group $G$ not admitting or admitting a complex structure (see \cite[Section 4]{koh}). 
We recall moreover that, as $\mathfrak{g}$ is simple, $(\mathfrak{g},\mathfrak{h})$ is effective, and then by \cite[Lemma~3.1]{KaneyukiKozai2}, there exists a unique element $Z\in \mathfrak{g}$ such that 
\begin{enumerate}[(a)]
 \item $ad_Z=I$;\label{Z1}
 \item The centralizer $c_{\mathfrak{g}}(Z)$ of $Z$ in $\mathfrak{g}$ is equal to $\mathfrak{h}$.\label{Z2}
 \end{enumerate}
By \cite[Lemma~3.1]{KaneyukiKozai1} we have that
the center $\mathfrak{z(h)}$ of $\mathfrak{h}$ is given by
 \begin{equation}\label{eq:center}
  \mathfrak{z(h)}=\begin{cases}
                  span_\mathbb{R}\{Z, iZ\}, \; \text{if } \mathfrak{g} \text{ admits a complex structure } \\
                  span_\mathbb{R}\{Z\}, \; \text{if } \mathfrak{g} \text{ does not admit a complex structure}, \\
                 \end{cases}
 \end{equation}
where $i$ is a complex structure on $\mathfrak{g}$ (commuting with the involution $\sigma$) defined in 
\cite[Theorem~6]{koh}.

\vspace{0.3 cm}

In the next proposition we extend Proposition~2 of \cite{shimokawa-sugimoto} --proved when the center of $H$ is one-dimensional-- to any simple, para-Hermitian symmetric space. For completeness we include also the proof in case $\dim (\mathfrak{z(h)})=1$. In particular, if $\dim (\mathfrak{z(h)})=2$ we obtain that the metric $g$ of a para-Hermitian symmetric space $G/H$ is not necessarily the $G$-invariant extension of a multiple of the Killing form $B$ of $\mathfrak{g}$.
In Theorem \ref{th:lambda} we find a codimension-one closed subgroup $K$ of $H$ such that the manifold $G/K$ is naturally endowed with a para-Sasakian $\phi$-symmetric structure, induced from the para-Hermitian one.

\begin{proposition}\label{prop:metric}
 Let $(G/H, I,g)$ be a simple, para-Hermitian symmetric space. We have:
 \begin{enumerate}
  \item If the center of $H$ is one-dimensional, then $g$ is the $G$-invariant extension of the Killing form of $G$ up to a constant.
  \item \label{2} If the center of $H$ is two-dimensional, then $\mathfrak{g}$ admits a complex structure $i$ such that the metric $g$ is the $G$-invariant extension of a bilinear, symmetric form of type
  \begin{equation}\label{eq:f lambda mu}
   f_{(\lambda,\mu)}:=\lambda B(\cdot,\cdot)+\mu B(i \cdot,\cdot), \quad \lambda,\mu\in \mathbb{R},
  \end{equation}
where $B$ denotes the Killing form of $\mathfrak{g}$, and $\lambda,\mu\in \mathbb{R}$, $(\lambda,\mu)\neq (0,0)$. 
Moreover, for every $\lambda,\mu\in \mathbb{R}$, $(\lambda,\mu)\neq (0,0)$ we have that $f_{(\lambda,\mu)}$ is an $ad_{\mathfrak{h}}$-invariant, nondegenerate symmetric bilinear form on the $-1$-eigenspace $\mathfrak{n}\cong T_oG/H$ of the involution $\sigma$ of $G/H$, with signature $(n,n)$, $2n=dim \,G/H$. We obtain hence a $2$-parameter family of pseudo-Riemannian metrics $g_{(\lambda,\mu)}$ such that $(G/H,I,g_{(\lambda,\mu)})$ is para-Hermitian symmetric space.
 \end{enumerate}

\end{proposition}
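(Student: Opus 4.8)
The plan is to pass to the associated para-Hermitian symmetric system $(\mathfrak{g},\mathfrak{h},\sigma,I_o,\langle\,,\,\rangle)$ of $(G/H,I,g)$, where $\langle\,,\,\rangle$ is the $\ad_{\mathfrak{h}}$-invariant, nondegenerate, symmetric bilinear form on $\mathfrak{n}\cong T_oG/H$ induced by $g$, and to compare $\langle\,,\,\rangle$ with the restriction to $\mathfrak{n}$ of the Killing form $B$ of $\mathfrak{g}$. First I would record the structural facts that make the comparison possible. Since $\mathfrak{g}$ is simple, $B$ is nondegenerate, and as $\mathfrak{h}$ and $\mathfrak{n}$ are $B$-orthogonal (because $\sigma$ is an automorphism), $B|_{\mathfrak{n}}$ is nondegenerate as well. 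Writing $\mathfrak{n}=\mathfrak{n}_+\oplus\mathfrak{n}_-$ for the $(\pm1)$-eigenspace decomposition of $I_o=\ad_Z|_{\mathfrak{n}}$, the identity $\mathfrak{h}=c_{\mathfrak{g}}(Z)$ exhibits $\mathfrak{g}=\mathfrak{n}_-\oplus\mathfrak{h}\oplus\mathfrak{n}_+$ as a $3$-grading with grading element $Z$; the $\ad_Z$-skewness of $B$ then forces $\mathfrak{n}_+$ and $\mathfrak{n}_-$ to be $B$-isotropic, so $B$ restricts to a nondegenerate $\ad_{\mathfrak{h}}$-invariant pairing $\mathfrak{n}_+\times\mathfrak{n}_-\to\R$, and the compatibility condition (III) for $\langle\,,\,\rangle$ and $I_o$ shows in the same way that $\mathfrak{n}_\pm$ are $\langle\,,\,\rangle$-isotropic, so $\langle\,,\,\rangle$ too is recovered from a nondegenerate $\ad_{\mathfrak{h}}$-invariant pairing on $\mathfrak{n}_+\times\mathfrak{n}_-$. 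I would also use that, $\mathfrak{g}$ being simple, $\mathfrak{n}_+$ and $\mathfrak{n}_-$ are \emph{irreducible} $\ad_{\mathfrak{h}}$-modules (the usual irreducibility of $\mathfrak{g}_{\pm1}$ in a $3$-grading of a simple Lie algebra, cf.\ \cite{KaneyukiKozai2}).

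Comparing the two pairings produces an $\R$-linear $\Phi\in\operatorname{End}(\mathfrak{n})$, commuting with $\ad_{\mathfrak{h}}$ and with $I_o$ (hence preserving $\mathfrak{n}_\pm$), and $B$-self-adjoint, with $\langle X,Y\rangle=B(\Phi X,Y)$ on $\mathfrak{n}$; here $B$-self-adjointness merely says that $\Phi|_{\mathfrak{n}_-}$ is the $B$-transpose of $\Phi_+:=\Phi|_{\mathfrak{n}_+}$, so everything reduces to identifying $\Phi_+$ inside $D:=\operatorname{End}_{\ad_{\mathfrak{h}}}(\mathfrak{n}_+)$, which by Schur's lemma is a real division algebra. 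The core step is to prove that $D=\R$ when $\dim\mathfrak{z(h)}=1$ and $D=\CC$ when $\dim\mathfrak{z(h)}=2$. I would do this by complexifying, using the canonical isomorphism $D\otimes_{\R}\CC\cong\operatorname{End}_{\mathfrak{g}_0^{\CC}}(\mathfrak{g}_1^{\CC})$ for the complexified $3$-grading $\mathfrak{g}^{\CC}=\mathfrak{g}_{-1}^{\CC}\oplus\mathfrak{g}_0^{\CC}\oplus\mathfrak{g}_1^{\CC}$. If $\mathfrak{g}$ admits no complex structure --- equivalently, by \eqref{eq:center}, $\dim\mathfrak{z(h)}=1$ --- then $\mathfrak{g}^{\CC}$ is simple, $\mathfrak{g}_1^{\CC}$ is an irreducible $\mathfrak{g}_0^{\CC}$-module, hence $\operatorname{End}_{\mathfrak{g}_0^{\CC}}(\mathfrak{g}_1^{\CC})=\CC$ and $D=\R$. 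If $\mathfrak{g}$ admits a complex structure $i$ --- equivalently $\dim\mathfrak{z(h)}=2$ --- then $\mathfrak{g}^{\CC}\cong\mathfrak{g}\oplus\overline{\mathfrak{g}}$ and $\mathfrak{g}_1^{\CC}$ is a sum of two irreducible $\mathfrak{g}_0^{\CC}$-modules which are non-isomorphic, the central element $iZ$ acting on them by the scalars $+\sqrt{-1}$ and $-\sqrt{-1}$; thus $\operatorname{End}_{\mathfrak{g}_0^{\CC}}(\mathfrak{g}_1^{\CC})=\CC\oplus\CC$ and $D=\R\,\id\oplus\R\,i|_{\mathfrak{n}_+}$. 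In the first case $\Phi_+=\lambda\,\id$, so $\Phi=\lambda\,\id$ and $g$ is $\lambda$ times the $G$-invariant extension of $B$ --- this is assertion~(1). In the second case $\Phi_+=\lambda\,\id+\mu\,i|_{\mathfrak{n}_+}$, and since $i$ is self-adjoint for the (real) Killing form $B$ --- $\mathfrak{g}$ being the realification of a complex simple Lie algebra --- its $B$-transpose on $\mathfrak{n}_-$ is again $i|_{\mathfrak{n}_-}$, so $\Phi=(\lambda\,\id+\mu\,i)|_{\mathfrak{n}}$ and $\langle\,,\,\rangle=f_{(\lambda,\mu)}|_{\mathfrak{n}}$, which is the first assertion of~(2).

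For the remaining part of~(2) I would verify that for each $(\lambda,\mu)\neq(0,0)$ the data $(\mathfrak{g},\mathfrak{h},\sigma,I_o,f_{(\lambda,\mu)}|_{\mathfrak{n}})$ form a para-Hermitian symmetric system; the correspondence recalled before the proposition then furnishes the para-Hermitian symmetric space $(G/H,I,g_{(\lambda,\mu)})$. Conditions (I)--(II) concern only $\sigma$ and $I_o$ and are inherited. The form $f_{(\lambda,\mu)}$ is $\ad_{\mathfrak{h}}$-invariant because $B$ is $\ad$-invariant and $i$ commutes with $\ad_{\mathfrak{h}}$ (being $\ad_{\mathfrak{g}}$-equivariant); it is symmetric because $B$ is and $i$ is $B$-self-adjoint; and it satisfies (III) because $I_o=\ad_Z$ is $B$-skew and commutes with $i$. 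Its nondegeneracy follows from $f_{(\lambda,\mu)}=B\big((\lambda\,\id+\mu\,i)\cdot,\cdot\big)$ together with the nondegeneracy of $B|_{\mathfrak{n}}$ and the invertibility of $\lambda\,\id+\mu\,i$ (indeed $(\lambda\,\id+\mu\,i)(\lambda\,\id-\mu\,i)=(\lambda^2+\mu^2)\id$); and since $\mathfrak{n}_+$ and $\mathfrak{n}_-$ are $n$-dimensional and isotropic for any form satisfying (III), a nondegenerate such form on the $2n$-dimensional space $\mathfrak{n}$ must have signature $(n,n)$. The main obstacle is the identification of $D$ in the middle paragraph: the rest of the argument is formal, but the equality $\dim_{\R}D=\dim\mathfrak{z(h)}$ genuinely rests on the structure theory of simple para-Hermitian symmetric spaces --- namely the $\mathfrak{g}_0^{\CC}$-irreducibility of $\mathfrak{g}_1^{\CC}$ for $3$-gradings of complex simple Lie algebras and the dichotomy \eqref{eq:center} of \cite[Lemma~3.1]{KaneyukiKozai1}, \cite[Section~4]{koh} linking $\dim\mathfrak{z(h)}$ to the presence of a complex structure on $\mathfrak{g}$.
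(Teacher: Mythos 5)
Your proof is correct, but it takes a genuinely different route from the paper. The paper's argument is cohomological: since $\mathfrak{g}$ is simple, Whitehead's lemmas give $H^1(\mathfrak{g})=H^2(\mathfrak{g})=0$, so the pullback $\Omega$ of the (closed) fundamental form is $d\alpha$ with $\alpha=B(A,\cdot)$ for a unique $A\in\mathfrak{g}$; nondegeneracy of $B$ and $\omega$ forces $c_{\mathfrak{g}}(A)=\mathfrak{h}$, hence $A\in\mathfrak{z(h)}=\spn\{Z\}$ or $\spn\{Z,iZ\}$ by \eqref{eq:center}, and the constants $\lambda,\mu$ are simply the coordinates of $A$ in this basis, after which $g_o(X,Y)=-B(A,[X,\ad_ZY])$ is expanded directly. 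You instead argue representation-theoretically: encode $g_o$ as $B(\Phi\,\cdot,\cdot)$ for an intertwiner $\Phi$ commuting with $\ad_{\mathfrak{h}}$ and $I$, invoke the irreducibility of $\mathfrak{n}_\pm$ under $\ad_{\mathfrak{h}}$ (the standard $3$-grading argument for simple $\mathfrak{g}$), and compute the Schur commutant $D$ by complexifying, so that $\dim_{\R}D=\dim\mathfrak{z(h)}$ falls out of the dichotomy \enquote{$\mathfrak{g}^{\CC}$ simple} versus \enquote{$\mathfrak{g}^{\CC}\cong\mathfrak{g}\oplus\overline{\mathfrak{g}}$}. Both routes are sound; each has its advantages. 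The paper's proof is more elementary and, importantly, produces the element $A$ and the identity $\omega=-B([A,\cdot],\cdot)$, which are reused verbatim in the proof of Theorem~\ref{th:lambda} (the choice of $\alpha$ there is $1/(2B(A,\tilde C))$), so it does double duty. Your proof needs no Lie algebra cohomology and does not even use closedness of $\omega$ — only $\ad_{\mathfrak{h}}$-invariance and compatibility with $I$ — so it actually classifies \emph{all} invariant compatible metrics at once, which makes the \enquote{moreover} clause of part (2) automatic rather than a separate verification; your derivation of the signature $(n,n)$ from the two $n$-dimensional isotropic subspaces $\mathfrak{n}_\pm$ is also cleaner than the paper's explicit matrix computation in a $\tilde B$-orthonormal basis. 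The only inputs you should make sure are properly justified are the irreducibility of $\mathfrak{g}_{\pm1}$ for a $3$-graded simple Lie algebra (a short standard argument: for $0\neq V\subset\mathfrak{g}_1$ an $\mathfrak{h}$-submodule, $V+[\mathfrak{g}_{-1},V]+[\mathfrak{g}_{-1},[\mathfrak{g}_{-1},V]]$ is a nonzero ideal) and the base-change isomorphism for commutants, both of which are routine.
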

\begin{proof}
Let $\Omega$ denote the pullback via the natural projection
\[
 \pi:G\rightarrow G/H
\]
of the fundamental $2$-form $\omega$ of $G/H$. As para-Hermitian symmetric spaces are para-K\"ahler (see Theorem~\ref{th:R-R*}), $\omega$ is closed and then also the left invariant, $2$-form $\Omega$ on $G$ is so. We know moreover that, the first and second cohomology classes of $G$ vanish (since by assumption $G$ is simple), and thus there exists a unique $1$-form $\alpha$ such that 
\[
 d\alpha =\Omega. 
\]
Now, using the isomorphism
\[
 \mathfrak{g}\rightarrow \mathfrak{g}^*; \quad X \mapsto B(X,\cdot),
\]
we consider the unique element $A\in \mathfrak{g}$ satisfying
\begin{equation}\label{A}
 \alpha = B(A,\cdot);
\end{equation}
here as usual $\mathfrak{g}$ denotes the Lie algebra of $G$ and $\mathfrak{g}^*$ the dual space of $\mathfrak{g}$. 
Observe that, for every $X,Y\in \mathfrak{g}$
\begin{equation}\label{eq:B-omega}
 \begin{aligned}
 - B([A,X],Y)&= - B(A,[X,Y])=-\alpha([X,Y])=d\alpha(X,Y)\\
  &=\Omega(X,Y)=\omega(\pi_*X,\pi_*Y)\circ \pi,
 \end{aligned}
\end{equation}
and since $B$ and $\omega$ are both nondegenerate, this implies that
\begin{equation}\label{eq:CA}
 c_\mathfrak{g}(A)=\mathfrak{h}.
\end{equation}
Using the same notation as above, we consider the unique element $Z\in \mathfrak{g}$ such that 
\[
 ad_Z=I, \; c_{\mathfrak{g}}(Z)=\mathfrak{h},
\]
and by \eqref{eq:center}, we have
\begin{equation*}
 A=\begin{cases}
    \lambda Z, \; \text{if } \mathfrak{z(h)} \text{ is one-dimensional}\\
    \lambda Z+\mu \,i Z,  \; \text{if } \mathfrak{z(h)} \text{ is two-dimensional}
   \end{cases}
\end{equation*}
with $\lambda,\mu \in \mathbb{R}$. Since $ad_Z^2|_{\mathfrak{n}}=Id$, and
  \[
   g_o(X,Y)=\omega(X,IY)=\Omega(X,IY)=-B(A,[X,ad_ZY]),
  \]
for every $X,Y\in T_o(G/H)$, where $o=eH\in G/H$ and $e$ is the neutral element in $G$, we obtain:
\begin{itemize}
 \item If $\mathfrak{z(h)}$ is one-dimensional,
 \[
  g_o(X,Y)=-\lambda B(Z,[X,ad_ZY])=\lambda B(ad_Z^2 Y,X)= \lambda B(X,Y).
 \]
\item If $\mathfrak{z(h)}$ is two-dimensional,
\begin{equation*}
 \begin{aligned}
  g_o(X,Y)&=-\lambda B(Z,[X,ad_ZY])-\mu B(iZ,[X,ad_ZY])\\
  &= \lambda B(X,Y)+\mu B(X,ad_{iZ}ad_ZY)\\
  &=\lambda B(X,Y)+\mu B(X,iY).
 \end{aligned}
\end{equation*}
\end{itemize}
\vspace{0.2 cm}
 Now we prove the second part of \eqref{2}. 
We consider the the bilinear symmetric map 
\[
f_{(a,b)}(\cdot\,,\cdot):=a B(\cdot\,,\cdot)+b B(i \cdot,\cdot),
\]
with $a,b\in \mathbb{R}$, $(a,b)\neq(0,0)$. Observe that 
for every $X,Y\in \mathfrak{n}$
\begin{equation*}
 \begin{aligned}
 f_{(a,b)}(X,Y)&=a B(X,Y)+b B(i X,Y)\\
 &=2a \mathfrak{Re}(\tilde{B}(X,Y))+2b \mathfrak{Re}(\tilde{B}(i X,Y))\\
 &=2a \mathfrak{Re}(\tilde{B}(X,Y))-2b \mathfrak{Im}(\tilde{B}(X,Y)),
 \end{aligned}
\end{equation*}
where $\tilde{B}$ denote the Killing form of the complex $n$-dimensional Lie algebra $\mathfrak{g}$.
Since $\tilde{B}|_{\mathfrak{n}}:\mathfrak{n}\times \mathfrak{n}\rightarrow \mathbb{C}$ is non-degenerate ($B$ is by assumption non-degenerate on $\mathfrak{g}=\mathfrak{h}\oplus\mathfrak{n}$, and $B(\mathfrak{h},\mathfrak{n})=0$; then the restriction of $B$ to both $\mathfrak{h}$ and $\mathfrak{n}$ is non-degenerate and hence so is also the restriction of $\tilde{B}$ to $\mathfrak{h}$ and to $\mathfrak{n}$), bilinear symmetric form, there exists a basis $\{e_j\}_{j=1,\dots, n}$ of $\mathfrak{n}$ such that $\tilde{B}$ is represented by the identity matrix, and hence for every $X,U\in \mathfrak{n}$,
\[
 X=\sum_{j=1}^n (x_j+iy_j)e_j\equiv (x_1,y_1,\dots,x_n,y_n),
\]
\[
 U=\sum_{j=1}^n (u_j+iv_j)e_j\equiv (u_1,v_1,\dots,u_n,v_n),
\]
we have
\[
 \tilde{B}(X,U)=\sum_{j=1}^n x_ju_j-\sum_{j=1}^n y_jv_j + i\left( \sum_{j=1}^n x_jv_j+\sum_{j=1}^n y_ju_j\right).
\]
Then, the real and imaginary part of $ \tilde{B}$ are represented by the matrices 

\begin{equation*}
 \mathfrak{Re}\tilde{B}\equiv\begin{pmatrix}
  \begin{array}{cccccc}
\begin{pmatrix}
1 & 0\\
0 & -1
\end{pmatrix} & &  \\
 & \ddots\\
 &  & \begin{pmatrix}
1 & 0\\
0 & -1
\end{pmatrix}
\end{array}
 \end{pmatrix},
\end{equation*}

\begin{equation*}
 \mathfrak{Im}\tilde{B}\equiv\begin{pmatrix}
  \begin{array}{cccccc}
\begin{pmatrix}
0 & 1\\
1 & 0
\end{pmatrix} & &  \\
 & \ddots\\
 &  & \begin{pmatrix}
0 & 1\\
1 & 0
\end{pmatrix}
\end{array}
 \end{pmatrix},
\end{equation*}
and hence
\begin{equation*}
 f_{(a,b)}\equiv 2\cdot\begin{pmatrix}
  \begin{array}{cccccc}
\begin{pmatrix}
a & -b\\
-b & -a
\end{pmatrix} & &  \\
 & \ddots\\
 &  & \begin{pmatrix}
a & -b\\
-b & -a
\end{pmatrix}
\end{array}
 \end{pmatrix}
\end{equation*}
is a non-degenerate bilinear form with signature $(n,n)$. 
Since $Z$ commutes with every element in $\mathfrak{h}$ and $I=ad_Z$ commutes with $i$, it is easy to check that conditions \ref{I}, \ref{II}, \ref{III}, \ref{IV} are satisfied 
and hence $(G/H,I,g_{(a,b)})$ is a para-Hermitian symmetric space, where $g_{(a,b)}$ denotes the $G$-invariant metric determined by the $ad_{H}-$invariant bilinear symmetric form $f_{(a,b)}$. 
\end{proof}

\begin{theorem}\label{th:lambda}
Let $(G/H,I,\langle\,,\,\rangle)$ be a simple, para-Hermitian symmetric space. 
Then $H$ has one- or two-dimensional center and admits a codimension one closed subgroup $K$ such that $G/K$ is a para-Sasakian $\varphi$-symmetric manifold canonically fibering, via the Boothby--Wang fibration, over $G/H$. 
\end{theorem}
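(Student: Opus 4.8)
The plan is to produce the subgroup $K$ explicitly from the structure of the centre $\mathfrak{z(h)}$, exactly parallel to the Riemannian construction in \cite{KowalskiJimenez}, using Proposition~\ref{prop:metric} to control the metric and Theorem~\ref{th:characterization} (equivalence $(ii)\Leftrightarrow(i)$) to certify $\phi$-symmetry once the bundle is built. First I would invoke the trichotomy recalled before the statement: since $\mathfrak{g}$ is simple, $(\mathfrak{g},\mathfrak{h})$ is effective, so by \cite[Lemma~3.1]{KaneyukiKozai2} there is a unique $Z\in\mathfrak{g}$ with $\ad_Z=I$ and $c_{\mathfrak{g}}(Z)=\mathfrak{h}$, and by \eqref{eq:center} the centre $\mathfrak{z(h)}$ is either $\spn_{\mathbb R}\{Z\}$ or $\spn_{\mathbb R}\{Z,iZ\}$; in particular $\dim\mathfrak{z(h)}\in\{1,2\}$, which is the first assertion.

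Next I would define the candidate codimension-one subalgebra. When $\dim\mathfrak{z(h)}=1$, set $\mathfrak{k}:=[\mathfrak{h},\mathfrak{h}]$; since $\mathfrak{h}=\mathfrak{z(h)}\oplus[\mathfrak{h},\mathfrak{h}]$ by reductivity of the isotropy of a (para-)Hermitian symmetric space, $\mathfrak{k}$ has codimension one in $\mathfrak{h}$. When $\dim\mathfrak{z(h)}=2$, the centre is $\spn_{\mathbb R}\{Z,iZ\}$ and I would take $\mathfrak{k}:=[\mathfrak{h},\mathfrak{h}]\oplus\mathbb{R}(iZ)$ — that is, one kills only the $\mathbb R Z$-direction of the centre, keeping the $\mathbb R(iZ)$-piece together with the derived algebra — again a codimension-one subalgebra of $\mathfrak h$. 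In both cases $\mathfrak{k}$ is an ideal-complement to a one-dimensional central direction $\mathbb R Z$, so $\mathfrak{h}=\mathfrak{k}\oplus\mathbb{R}Z$ and $[\mathfrak{k},Z]=0$. I would then let $K\subset G$ be the connected Lie subgroup with $\operatorname{Lie}(K)=\mathfrak{k}$, and argue closedness: $\mathfrak{k}=\ker(\operatorname{ad}_Z|_{\mathfrak h})$ is the Lie algebra of the identity component of the centraliser $C_G(\exp\mathbb R Z)\cap H$ (or equivalently a level set of the $\operatorname{Ad}$-invariant form $B(Z,\cdot)$ restricted appropriately), hence closed in $H$ and therefore in $G$; here the freedom in the metric provided by Proposition~\ref{prop:metric}\eqref{2} is what guarantees $Z$ is non-null so that $\mathbb R Z$ splits off metrically.

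Then I would build the Boothby--Wang data on $M:=G/K$. The fibration $\pi:G/K\to G/H$ is a principal bundle with structure group $H/K\cong\mathbb R$ or $S^1$, and the one-dimensional direction $\mathbb R Z$ gives, via the reductive decomposition $\mathfrak g=\mathfrak k\oplus\mathbb R Z\oplus\mathfrak n$, a $G$-invariant connection $1$-form $\eta$ whose Reeb field is the fundamental vector field of $Z$. Its curvature is, up to the constant from $\eqref{A}$–$\eqref{eq:B-omega}$, the para-Kähler form $\omega$ of $G/H$ pulled back; since $\omega$ is symplectic and (after rescaling) has integral class when the fibre is $S^1$, Boothby--Wang applies and $\eta$ is a regular contact form on $M$. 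Pulling back $I$ and $g=g_{(\lambda,\mu)}$ horizontally and setting $\phi$ to act as $I$ on the horizontal distribution and as $0$ on $\xi$, the para-contact metric axioms \eqref{eq:paracontactmatric1}–\eqref{eq:paracontactmatric3} hold, and since $\omega$ is closed (para-Kähler base) the resulting para-contact metric structure is in fact para-Sasakian — this is exactly the converse direction of Theorem~\ref{th:R-R*}. Finally, the base of $\pi:M\to M/\xi=G/H$ is by construction the para-Hermitian symmetric space we started from, so condition $(ii)$ of Theorem~\ref{th:characterization} holds and $M=G/K$ is para-Sasakian $\phi$-symmetric.

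The main obstacle is the two-dimensional-centre case: one must verify that discarding the $\mathbb{R}Z$-summand (rather than some other line in $\mathfrak{z(h)}$) yields a subgroup $K$ that is simultaneously closed and such that the quotient carries a \emph{regular} contact form with the right Reeb vector field — i.e. that $Z$ can be chosen non-degenerate for a suitable metric in the family $g_{(\lambda,\mu)}$ and that the remaining central direction $iZ$ descends consistently to $G/K$ without obstructing the principal-bundle structure. Checking that the para-contact metric identities survive the pull-back and that $\eta$ is genuinely a contact (not merely almost-contact) form, using that $\operatorname{ad}_Z^2|_{\mathfrak n}=\operatorname{id}$ and $\omega$ non-degenerate, is routine once the subalgebra is correctly identified.
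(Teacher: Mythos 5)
Your overall strategy is the paper's, but in the two-dimensional-centre case you have committed to the wrong line of $\mathfrak{z(h)}$, and this is exactly where the construction can fail. The compatibility condition \eqref{eq:paracontactmatric3}, $d\eta(X,Y)=g(X,\phi Y)$, is not automatic: for $X,Y\in\mathfrak n$ one has $2d\eta_o(X,Y)=-\eta_o([X,Y]_{\mathfrak m})$, which only sees the component of $[X,Y]$ along the Reeb direction $\tilde C$, whereas $g_o(X,\phi Y)=-B(A,[X,Y])$, where $A=\lambda Z+\mu\, iZ$ is the element encoding the \emph{given} metric (see \eqref{A}--\eqref{eq:B-omega}). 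Writing $[X,Y]=a\tilde C+bC+[X,Y]_{[\mathfrak h,\mathfrak h]}$, equality for all $X,Y$ forces $B(A,C)=0$; and the coefficient $b$ does take nonzero values, since $B(iZ,[X,Y])=B(i\,IX,Y)$ and $B|_{\mathfrak n}$ is nondegenerate. With your choice $C=iZ$ one computes $B(A,iZ)=-\mu\dim N$, so $\mathfrak k=[\mathfrak h,\mathfrak h]\oplus\mathbb R\, iZ$ works only when $\mu=0$, i.e.\ when the metric is a multiple of the Killing form; by Proposition~\ref{prop:metric} this is not the general case. The correct $C$ is the $B(A,\cdot)$-isotropic line in $\mathfrak{z(h)}$ (proportional to $\mu Z+\lambda\, iZ$), which depends on the metric --- precisely the point you flagged as ``the main obstacle'' but did not resolve.

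Second, your closedness argument does not work as stated: since $Z$ is central in $\mathfrak h$, $\ker(\ad_Z|_{\mathfrak h})=\mathfrak h$, not $\mathfrak k$; and identifying $\mathfrak k$ as the kernel of the linear form $B(Z,\cdot)|_{\mathfrak h}$ only describes the sub\emph{algebra} --- the connected subgroup integrating a hyperplane subalgebra may be dense rather than closed (irrational winding). One genuinely needs structure theory here: effectiveness of $(G,H)$ makes the isotropy representation faithful, whence the subgroup integrating $[\mathfrak h,\mathfrak h]$ is closed; and in the two-dimensional case one must show that the connected group of $\mathfrak{z(h)}$ is $S^1\times\mathbb R$ (it cannot be $S^1\times S^1$ or $\mathbb R^2$), so that all its connected subgroups are closed, and then that the product of the subgroups integrating $[\mathfrak h,\mathfrak h]$ and $\mathbb R C$ is closed. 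Two smaller points: the ``converse direction of Theorem~\ref{th:R-R*}'' you invoke is not stated in the paper, so the para-Sasakian condition (vanishing of $N_\phi-2d\eta\otimes\xi$) on $G/K$ must be verified directly; and deducing $\phi$-symmetry from condition (ii) of Theorem~\ref{th:characterization} yields only \emph{local} $\phi$-symmetry --- for the global statement one should exhibit the symmetries explicitly, e.g.\ $s_{gK}=g\circ\sigma\circ g^{-1}$.
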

\begin{proof}
Let $(\mathfrak{g}, \mathfrak{h}, \sigma, I, \langle\,,\rangle)$ be the para-Hermitian symmetric system associated to $(G/H,I,\langle\,,\,\rangle)$, and 
\[
 \mathfrak{g}=\mathfrak{h}+\mathfrak{n},
\]
be the $\sigma$-eigenspace decomposition of $\mathfrak{g}$. We know that there exists a unique $Z\in \mathfrak{g}$ such that $I=ad_Z$ and the centralizer of $Z$ in $\mathfrak{g}$ equals $\mathfrak{h}$; moreover, the center of $\mathfrak{h}$ is one- or two-dimensional, when respectively $\mathfrak{g}$ does not admit or admits a complex structure. More precisely, by \eqref{eq:center} we have
\begin{equation*}
 \mathfrak{z(h)}=\begin{cases}
                  span_{\mathbb{R}}\{Z\}, \; \text{ if } \dim \mathfrak{z(h)}=1\\
                  span_{\mathbb{R}}\{Z,iZ\}, \; \text{ if } \dim \mathfrak{z(h)}=2
                 \end{cases}.
\end{equation*}
As in the proof of proposition~\ref{prop:metric}, we consider the unique element $A\in \mathfrak{g}$ determined by the fundamental $2$-form of $G/H$,
\begin{equation}\label{eq:A}
 A=\begin{cases}
    \lambda Z, \; \text{ if } \dim \mathfrak{z(h)}=1\\
    \lambda Z+\mu iZ, \; \text{ if } \dim \mathfrak{z(h)}=2
   \end{cases}
\end{equation}
with $\lambda,\mu\in \mathbb{R}$, see \eqref{A}.  By Proposition~\ref{prop:metric} we also know that the metric $g$ at the point $o=eH\in G/H$, with $e\in G$ identity element
$$g_o:\mathfrak{n}\times \mathfrak{n}\rightarrow \mathbb{R},$$
is of the form
\[
 g_o=\begin{cases}
            \lambda B, \text{ if } \dim \mathfrak{z(h)}=1\\
            \lambda B(\cdot,\cdot)+\mu B(i \cdot,\cdot), \text{ if } \dim \mathfrak{z(h)}=2
           \end{cases}.
\]
As $\mathfrak{h}$ coincides with the centralizer of a semisimple element,
it is reductive and decomposes as
\begin{equation}\label{eq:reductive}
 \mathfrak{h}=[\mathfrak{h},\mathfrak{h}]\oplus\mathfrak{z(h)}.
\end{equation}
 We consider the subalgebra $\mathfrak{k}\subset \mathfrak{h}$ given by
\begin{equation}\label{eq:k}
 \mathfrak{k}:=\begin{cases}
                [\mathfrak{h},\mathfrak{h}], \text{ if } \dim \mathfrak{z(h)}=1\\
               [\mathfrak{h},\mathfrak{h}]+span_{\mathbb{R}}\{C\}, \text{ if } \dim\mathfrak{z(h)}=2
              \end{cases},
\end{equation}
where $C$ is any element in $\mathfrak{z(h)}$, and show that the associated subgroup is closed. 

As $G$ is by assumption simple, the symmetric pair $(G,H)$ is effective; hence the isotropy representation is faithful, and the subgroup corresponding to $[\mathfrak{h},\mathfrak{h}]$ is closed (see for instance \cite[Corollary 16.2.8]{Hilgert-Neeb}). 

Thus, in case $\mathfrak{z(h)}$ is $1$-dimensional, the subgroup $K$ associated to $\mathfrak{k}=[\mathfrak{h},\mathfrak{h}]$ is closed.
In case $\mathfrak{z(h)}$ is $2$-dimensional,
considering a Cartan involution of $\mathfrak{g}$ which commutes with $\sigma$
\[
 \mathfrak{g}=\mathfrak{r}+i\mathfrak{r},
\]
where $\mathfrak{r}$ is a maximal compact Lie subalgebra of $\mathfrak{g}$, we have that 
$$\mathfrak{z(h)}= \mathfrak{z(h)}\cap  \mathfrak{r}+ \mathfrak{z(h)}\cap i \mathfrak{r}$$
where $\mathfrak{z(h)}\cap  \mathfrak{r}$, $\mathfrak{z(h)}\cap i \mathfrak{r}$ are both $1$-dimensional real subspaces by \cite[Lemma 3.1]{KaneyukiKozai1} generated by $iZ$ and $Z$ respectively. Observe that the Lie group $Z(H)$ 
associated to $\mathfrak{z(h)}$ can not be isomorphic to $S^1\times S^1$ or to $\mathbb{R}^2$ and hence it is isomorphic to $S^1\times \mathbb{R}$. Since any Lie subgroup of $S^1\times \mathbb{R}$ is closed, the subgroup $S$ corresponding to $span_{\mathbb{R}}\{C\}\subset \mathfrak{z(h)}$ is closed in $Z(H)$ and hence also in $G$. We observe moreover that, as 
$$
\big[span_\mathbb{R}\{C\},[\mathfrak{h}, \mathfrak{h}]\big]\subset [\mathfrak{h}, \mathfrak{h}],
$$ 
the subgroup corresponding to $[\mathfrak{h}, \mathfrak{h}]+span_\mathbb{R}\{C\}$ is $\widetilde{H}S$, where $\widetilde{H}$ denotes the subgroup associated to $[\mathfrak{h},\mathfrak{h}]$ (see for instance \cite[Corollary 11.1.20]{Hilgert-Neeb}). Let $p$ denote the submersion
\[
 p:\widetilde{H}\times Z(H) \rightarrow (\widetilde{H}\times Z(H))/(\widetilde{H}\cap Z(H))=H.
\]
Observe that 
\[
 p^{-1}(p(\widetilde{H}\times S))
\]
is closed because $\widetilde{H}\times S$ is closed in $\widetilde{H}\times Z(H)$ and $\widetilde{H}\cap Z(H)$ is a finite cyclic group (see \cite[Lemma 2.2]{KaneyukiKozai1}). Then, $p(\widetilde{H}\times S)=\widetilde{H}S$ is closed in $H$ and hence also in $G$.

Then, in both cases the subgroup $K$ associated to $\mathfrak{k}$ is a closed subgroup of $G$, and hence $G/K$ is a manifold. Now we define on $G/K$ the following tensors:

\begin{itemize}
 \item The $Ad_K$-invariant $(1,1)$-tensor $\phi$ determined by the endomorphism 
\begin{equation}\label{eq:struc 1}
 ad_{Z}|_{\mathfrak{m}}:{\mathfrak{m}} \rightarrow {\mathfrak{m}},
\end{equation}
where $\mathfrak{m}:=\mathfrak{n}\oplus \R \tilde{C} \cong T_{eK}G/K$ and
$\tilde{C}$ is a complement of $C\in\mathfrak{z(h)}$ if $\dim \mathfrak{z(h)}=2$, and $\tilde{C}=Z$ if $\dim \mathfrak{z(h)}=1$. In case $\dim \mathfrak{z(h)}=2$, we will see that we need to choose $C,\tilde{C}$ generating $\mathfrak{z(h)}$, such that
\[
 B(A,C)=0, \; B(A,\tilde{C})\neq 0.
\]
 \item The vector field $\xi$ determined by $G$-invariance from
 \begin{equation}\label{eq:struc 4}
  \alpha \tilde{C}\in \mathfrak{m},
 \end{equation}
with $\alpha\in \R$, which will be determined in \eqref{alpha}, \eqref{alpha2} by the the compatibility condition \eqref{eq:paracontactmatric3}.
 \item The $G$-invariant $1$-form $\eta$ associated to the $Ad_K$-invariant linear map 
 \begin{equation}\label{eq:struc 2}
  \eta_o: \mathfrak{m} \rightarrow \R; \; \eta_o(\xi)=1, \; \eta_o(X)=0 \text{ for every } X\in \mathfrak{n}.
 \end{equation}
 \item The $G$-invariant metric $g$ on $G/K$ associated to the $Ad_K$-invariant inner product $\langle\langle\;,\;\rangle\rangle$ on $\mathfrak{m}$ defined by  
  \begin{equation}\label{eq:struc 3}
   \langle\langle \xi, \xi\rangle\rangle=1,\; \langle\langle X,Y \rangle\rangle=\langle X,Y\rangle, \; \langle\langle\xi,X\rangle\rangle=0 ,
  \end{equation}
 for every $X,Y\in \mathfrak{n}$.
\end{itemize}
To see that $(G/K,\varphi,\xi,\eta,g)$ is a para-Sasakian $\phi$-symmetric space we use a similar argument as in the Sasakian setting, cf. \cite[Section~4]{KowalskiJimenez}. 

We first show that $(G/K,\varphi,\xi,\eta,g)$ is para-contact. Observe that the properties \eqref{eq:paracontactmatric1},  \eqref{eq:paracontactmatric2} in the definition of para-contact metric manifolds follow directly by construction of $\varphi,\xi,\eta,g$. Thus, we only have to check that \eqref{eq:paracontactmatric3} holds, namely that
 \[
  d\eta_o({X},{Y})=g_o(X,\phi Y), \text{ for every } X,Y\in T_o(G/K)=\mathfrak{m}.
 \]
 Let $X,Y\in \mathfrak{m}$ and denote by  $\tilde{X}$, $\tilde{Y}$ the fundamental vector fields of $X,Y$. 
 As $\eta$ is $G$-invariant we have that
 \begin{equation}\label{etaA}
  0=\mathcal{L}_{\tilde{X}}\eta \tilde{Y}=\tilde{X}(\eta(\tilde{Y}))- \eta[\tilde{X},\tilde{Y}],
 \end{equation}
and hence
 \begin{equation*}
  \begin{aligned}
  2d\eta(\tilde{X},\tilde{Y})&=\tilde{X}(\eta(\tilde{Y}))-\tilde{Y}(\eta(\tilde{X}))-\eta[\tilde{X},\tilde{Y}]\\
  &=-\eta[\tilde{Y},\tilde{X}]=-\eta\widetilde{[{X},{Y}]},
  \end{aligned}
 \end{equation*}
and
\[
 2 d\eta_o({X},{Y})=-\eta_o([{X},{Y}]_{\mathfrak{m}}),
\]
where $[{X},{Y}]_{\mathfrak{m}}$ denotes the projection of $[X,Y]$ on $\mathfrak{m}$. Then, condition \eqref{eq:paracontactmatric3} is equivalent to
\begin{equation}\label{eq:2lambdaB}
 2 \langle\langle X,\phi Y \rangle\rangle=-\eta_o([{X},{Y}]_{\mathfrak{m}}),
\end{equation}
for every $X,Y\in \mathfrak{m}$. Using \eqref{etaA} we have that  equation \eqref{eq:2lambdaB} holds trivially if $X\in \mathbb{R}\tilde{C}$ or $Y\in \mathbb{R}\tilde{C}$. Now we consider $X,Y\in \mathfrak{n}$.
We have that $[X,Y]\in \mathfrak{h}$ and hence
\[
 [X,Y]=\begin{cases}
        a Z + [X,Y]_{[\mathfrak{h},\mathfrak{h}]}, \, \text{if } \dim(\mathfrak{z(h)})=1\\
        a \tilde{C} + b C + [X,Y]_{[\mathfrak{h},\mathfrak{h}]}, \, \text{if } \dim(\mathfrak{z(h)})=2
       \end{cases},
\]
with $a,b\in \R$ and $[X,Y]_{[\mathfrak{h},\mathfrak{h}]}$ the projection of $[X,Y]$ on the commutator ideal ${[\mathfrak{h},\mathfrak{h}]}$. Then, 
\begin{equation}\label{eq:alphaa}
 \eta_o([{X},{Y}]_{\mathfrak{m}})=\alpha^{-1}a.
\end{equation}
On the other hand, as $\mathfrak{h}$ is the centralizer in $\mathfrak{g}$ of $A$, we have that
\[
 B(A,[\mathfrak{h},\mathfrak{h}])=0,
\]
and hence 
\begin{equation*}
 B(A,[X,Y])=\begin{cases}
             B(A,aZ), \, \text{if } \dim(\mathfrak{z(h)})=1\\
             B(A,a \tilde{C} + b C), \, \text{if } \dim(\mathfrak{z(h)})=2
            \end{cases}.
\end{equation*}
Then,
\begin{itemize}
 \item if $\dim(\mathfrak{z(h)})=1$:
 \begin{equation} \label{eq:z1}
 \begin{aligned}
 \langle\langle X,\phi Y \rangle\rangle&=\langle X,ad_Z Y \rangle = \lambda B(X,ad_ZY)\\
 &=-\lambda B([X,Y],Z)=-\lambda B(aZ,Z)\\
 &=- \lambda a \dim N.
 \end{aligned}
\end{equation}
\item if $\dim(\mathfrak{z(h)})=2$:
\begin{equation} \label{eq:z2}
 \begin{aligned}
 \langle\langle X,\phi Y \rangle\rangle&=\langle X,ad_Z Y \rangle=-B([X,Y],A)\\
 &=-a B(\tilde{C},A)-bB(C,A).
 \end{aligned}
\end{equation}
\end{itemize}
If we take 
\begin{itemize}
 \item \begin{equation}\label{alpha}
        \alpha=\frac{1}{2\lambda\dim N},
       \end{equation}
in case $\dim(\mathfrak{z(h)})=1$,
\item $C,\tilde{C}$ generating $\mathfrak{z(h)}$, such that $B(A,C)=0$, $B(A,\tilde{C})\neq 0$ and 
\begin{equation}\label{alpha2}
 \alpha=\frac{1}{2B(A,\tilde{C})},
\end{equation}
in case $\dim(\mathfrak{z(h)})=2$,
\end{itemize}
then
\eqref{eq:2lambdaB} holds, and we have that
$(G/K,\phi,\xi,\eta,g)$ is a para-contact metric manifold.

We prove now that $(\phi,\xi,\eta,g)$ is para-Sasakian, namely that the Nijenhuis tensor $N_\phi$ of $\phi$ vanishes.
Notice that the same argument in \cite[Lemma 4.2.1]{KowalskiJimenez} applies also in the para-contact setting, yielding that $\xi$ is a complete Killing vector field. Then, by \cite[Proposition~2.3]{zamkovoy}
\[
 \mathcal{L}_\xi \phi(X)=0,
\]
for every $X\in T(G/K)$. Observe that for every $X\in T(G/K)$,
\begin{equation*}
 \begin{aligned}
  N_{\phi}(\xi,X)-2d\eta(\xi,X)\xi&=N_{\phi}(\xi , X)\\
  &=\phi^2[\xi,X]-\phi[\xi,\phi X]\\
  &=-\phi (\mathcal{L}_\xi \phi(X))= 0.
 \end{aligned}
\end{equation*}
Let $X,Y$ be  horizontal vector fields. Then, they coincide with the lift of their projection on $G/K$ via
\[
 \pi:G/K \rightarrow G/H,
\]
namely 
\[
 X= (\pi_*X)^*, \; Y=(\pi_*Y)^*.
\]
Moreover, by the definition of $\phi$
\[
 \phi(W)=(I(\pi_*W))^*,
\]
and we have that
\begin{equation*}
 \begin{aligned}
  N_{\phi}(X,Y)-2d\eta(X,Y)\xi=& (I^2(\pi_*[X,Y]))^*+[(I(\pi_*X))^* , (I(\pi_*Y))^*]\\
  &- (I(\pi_*[(I\pi_*X)^* , Y]))^*- (I(\pi_*[X,(I\pi_*Y)^*]))^*\\
  &-2d\eta(X,Y)\xi\\
  =& (I^2([\pi_*X,\pi_*Y]))^*+[I(\pi_*X) , I(\pi_*Y)]^*\\
  &+\eta([\phi X , \phi Y])\xi - \left(I([\pi_*(I\pi_*X\right)^* , \pi_*Y]))^*\\
  &-(I([\pi_*X,\pi_*(I\pi_*Y)^*]))^*-2d\eta(X,Y)\xi\\
  =& ([I,I](\pi_*X,\pi_*Y))^*-2(d\eta(\phi X, \phi Y)+d\eta(X,Y))\xi\\
  =& -2\Big(\omega(I(\pi_* X), I(\pi_* Y))\circ\phi+\omega(\pi_*X,\pi_*Y)\circ \phi\Big)\xi\\
  =&0.
 \end{aligned}
\end{equation*}
Finally we show that $G/K$ is $\phi$-symmetric,
constructing explicitly the $\phi$-geodesic symmetries.
Let $x=gK$ be any element of $G/K$. Observe that the map
\[
 s_x:=g\circ \sigma \circ {g^{-1}}:G/K\rightarrow G/K; aK\mapsto g\sigma(g^{-1}a)K,
\]
is a well defined diffeomorphism fixing the point $x$ and reversing the horizontal geodesics.
In order to show that $s_x$ is an automorphism of the para-Sasakian structure it is sufficient to show that this holds for $x=eK=:o$, with $e$ the identity element of $G$.
Let  $aK\in G/K$ and $v$ be any vector in $T_{aK}G/K$. We consider $X\in \mathfrak{m}$ such that 
\[
 v=\left.\frac{d}{dt}\right|_{t=0}(a\exp(tX) K)
\]
Observe that 
the differential of $s_o$ in $v$ is 
\begin{equation*}
 \begin{aligned}
  (ds_o)_{aK}v &=\left.\frac{d}{dt}\right|_{t=0}s_o (a\exp(tX) K)\\
  &=\left.\frac{d}{dt}\right|_{t=0}\sigma (a) \sigma(\exp(tX)) K\\
  &=(d\sigma (a))_e\circ (d\sigma)_e (X).
 \end{aligned}
\end{equation*}
 Then, since the para-Sasakian structure $(\phi,\xi,\eta,g)$ is induced by $G$-invariance from the $Ad_K$-invariant structure on $\mathfrak{m}$, defined in \eqref{eq:struc 1}, \eqref{eq:struc 4}, \eqref{eq:struc 2}, \eqref{eq:struc 3}, which is preserved by $\sigma$,  we have that $s_o$ preserves $(\phi,\xi,\eta,g)$.
\end{proof}

\begin{theorem}\label{th:semisimple}
 Let $(N,I,g)$ be a semisimple, para-Hermitian symmetric space. Then, $N$ is a product of simple, para-Hermitian symmetric spaces 
 \[
  N=N_1\times \dots \times N_n.
 \]
Moreover, there exists a para-Sasakian $\phi$-symmetric space, fibering via the Boothby--Wang fibration over $N$, if and only if the fundamental $2$-forms $\omega_j$ of $N_j$ are all multiple with the same factor of an integer class, namely there exists $r\in \mathbb{R}$, such that
\[
 [r\,\omega_j]\in H^2(N_j;\mathbb{Z}),
\] 
for every $j\in \{1,\dots,n\}$.
\end{theorem}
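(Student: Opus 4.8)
The plan is to reduce to the simple factors via the structure theory of semisimple para-Hermitian symmetric spaces, and then turn the Boothby--Wang construction into a purely cohomological condition through the K\"unneth formula. For the decomposition, write the semisimple Lie algebra $\mathfrak{g}$ as a sum of simple ideals $\mathfrak{g}=\mathfrak{g}_1\oplus\cdots\oplus\mathfrak{g}_n$. Since $\sigma$ is an automorphism of $\mathfrak{g}$ it permutes the $\mathfrak{g}_j$; I would exclude that $\sigma$ interchanges a pair $\mathfrak{g}_j,\mathfrak{g}_k$ by noting that then $\mathfrak{h}$ would act on $\mathfrak{n}\cap(\mathfrak{g}_j\oplus\mathfrak{g}_k)$ through the adjoint representation of the simple ideal $\mathfrak{g}_j$, which is irreducible, contradicting the $\mathrm{ad}_{\mathfrak{h}}$-invariant splitting $\mathfrak{n}=\mathfrak{n}_+\oplus\mathfrak{n}_-$ into the Lagrangian eigendistributions of $I$ (alternatively, invoke the classification in \cite{KaneyukiKozai2,koh}). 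Hence $\sigma$ preserves each $\mathfrak{g}_j$; writing $\mathfrak{h}_j:=\mathfrak{h}\cap\mathfrak{g}_j$, $\mathfrak{n}_j:=\mathfrak{n}\cap\mathfrak{g}_j$, and decomposing the element $Z$ with $\mathrm{ad}_Z=I$ as $Z=\sum_j Z_j$ with $Z_j\in\mathfrak{g}_j$ and $\mathrm{ad}_{Z_j}|_{\mathfrak{n}_j}=I|_{\mathfrak{n}_j}$, each quintuple $(\mathfrak{g}_j,\mathfrak{h}_j,\sigma|_{\mathfrak{g}_j},I|_{\mathfrak{n}_j},\langle\,,\,\rangle|_{\mathfrak{n}_j})$ is a simple para-Hermitian symmetric system. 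Thus $N=N_1\times\cdots\times N_n$ with $N_j=G_j/H_j$ simple, and the fundamental $2$-form is $\omega=\sum_j\pi_j^{*}\omega_j$, where $\pi_j\colon N\to N_j$ denotes the projection.

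Since each $N_j$ is simply connected, $H^1(N_j;\mathbb{R})=0$, so the K\"unneth theorem yields $H^2(N;\mathbb{Z})\cong\bigoplus_j H^2(N_j;\mathbb{Z})$, compatibly with the decomposition $[r\omega]=\sum_j\pi_j^{*}[r\omega_j]$; hence, for any $r\in\mathbb{R}$, $[r\omega]\in H^2(N;\mathbb{Z})$ if and only if $[r\omega_j]\in H^2(N_j;\mathbb{Z})$ for every $j$. For the ``if'' direction, assume such an $r$ exists. Rescaling the metric by the factor $r$ keeps $(N,I,rg)$ para-Hermitian symmetric with fundamental $2$-form $r\omega$, and by the previous remark $[r\omega]$ is integral; the Boothby--Wang construction (\cite{BW}, or \cite[Section~7.2]{book:geiges}) then produces a principal $S^1$- or $\mathbb{R}$-bundle $\pi\colon M\to N$ with a connection form $\eta$ of curvature $r\omega$, and by Theorem~\ref{th:R-R*} (the base being para-K\"ahler) $M$ is para-Sasakian with $M/\xi=(N,I,rg)$. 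To see that $M$ is $\phi$-symmetric I would check condition $(ii)$ of Theorem~\ref{th:characterization}: over a sufficiently small open set $V\subset N$ the bundle trivializes, so $\pi^{-1}(V)/\xi=V$ is an open submanifold of the para-Hermitian symmetric space $N$, hence locally para-Hermitian symmetric. Thus $M$ is locally $\phi$-symmetric, and passing to the universal cover $\widetilde{M}$ — which fibers over the simply connected $N$ and is therefore itself simply connected, complete and homogeneous — Theorem~\ref{th:globally symmetric} shows $\widetilde{M}$ is globally $\phi$-symmetric and still fibers over $N$ via Boothby--Wang. (Alternatively one can exhibit an explicit homogeneous model $\widehat{G}/K$ assembled from the simple factors as in Theorem~\ref{th:lambda}, with $\widehat{G}$ the quotient of $G_1\times\cdots\times G_n$ collapsing all but one of the central directions of the fibers.)

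For the ``only if'' direction, let $M$ be a para-Sasakian $\phi$-symmetric space fibering over $N$ via Boothby--Wang. Then $\eta$ is, up to a global rescaling, the connection form of a principal $S^1$- or $\mathbb{R}$-bundle $M\to N$, and the curvature of the underlying connection equals $r\omega$ for a suitable $r\in\mathbb{R}\setminus\{0\}$ (absorbing both the Boothby--Wang rescaling and any homothety needed to identify the induced base metric of Theorem~\ref{th:R-R*} with $g$). If the structure group is $\mathbb{R}$ the bundle is trivial, so $r\omega$ is exact and $[r\omega]=0$; if it is $S^1$ then $[r\omega]$ is the real image of the Euler class and hence integral. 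In either case $[r\omega]\in H^2(N;\mathbb{Z})$, and by the K\"unneth identification $[r\omega_j]\in H^2(N_j;\mathbb{Z})$ for every $j$, which is the asserted condition.

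I expect the main obstacle to be the first step: proving that the involution respects the decomposition into simple ideals and that each factor carries a genuine simple para-Hermitian symmetric system (so that $\omega$ is indeed a sum of pullbacks), together with the verification in the ``if'' direction that the Boothby--Wang total space is not merely para-Sasakian but $\phi$-symmetric — which requires feeding the local symmetry of $N$ into condition $(ii)$ of Theorem~\ref{th:characterization} and then globalizing via Theorem~\ref{th:globally symmetric}.
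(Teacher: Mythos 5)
Your overall architecture is sound and the ``only if'' direction, as well as the reduction of the integrality condition to the simple factors, agree in substance with the paper (which likewise reduces to simple para-Hermitian symmetric systems $(\mathfrak{g}_j,\mathfrak{h}_j,\ad_{Z_j},g_j)$ via \cite[Proposition~3.2, Lemma~3.1]{KaneyukiKozai2} and reads off integrality of each $[r\omega_j]$ from the existence of the bundle). Where you genuinely diverge is the ``if'' direction. The paper does \emph{not} run Boothby--Wang directly on $N$ with the rescaled class $[r\omega]$: it first produces, via Theorem~\ref{th:lambda}, the individual para-Sasakian $\phi$-symmetric models $M_j\to N_j$, forms the $((S^1)^p\times\R^q)$-bundle $M_1\times\dots\times M_n\to N$, and then quotients by an antidiagonal codimension-one subgroup, invoking Kobayashi's classification \cite{Kobayashi} of $S^1$-bundles over products; the $\phi$-symmetric structure is carried along because the symmetries of each $M_j$ are the \emph{explicit global} maps $g\circ\sigma_j\circ g^{-1}$ constructed in Theorem~\ref{th:lambda}, which descend to the quotient. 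Your route is more direct and buys a cleaner existence statement, at the price of having to prove $\phi$-symmetry abstractly rather than by exhibiting the symmetries. (Two small inaccuracies along the way: the fact that the total space of the prequantization bundle over a para-K\"ahler base is para-Sasakian is the \emph{converse} of Theorem~\ref{th:R-R*} and should be credited to \cite[Section~2]{KaneyukiWilliams} rather than to that theorem; and the K\"unneth step is unnecessary, since $[r\omega]=\sum_j\pi_j^*[r\omega_j]$ is integral as soon as each summand is, while restriction to slices gives the converse.)

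The one genuine soft spot is your globalization step. You establish \emph{local} $\phi$-symmetry via condition $(ii)$ of Theorem~\ref{th:characterization}, which is fine, but to upgrade to global $\phi$-symmetry you invoke Theorem~\ref{th:globally symmetric}, whose hypotheses include geodesic \emph{completeness} of $\widetilde M$. You assert this follows from homogeneity, but homogeneity of $M$ is itself not yet established at that point (Theorem~\ref{th:hom} presupposes global $\phi$-symmetry, so using it here would be circular), and in any case homogeneous pseudo-Riemannian metrics need not be complete, so completeness requires a separate argument. This is exactly the difficulty the paper's construction is designed to avoid: on the explicit reductive models $G_j/K_j$ the $\phi$-geodesic symmetries are globally defined automorphisms by construction, so no appeal to Kiri\v{c}enko-type completeness arguments is needed. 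Your parenthetical fallback --- assembling an explicit homogeneous model from the simple factors as in Theorem~\ref{th:lambda} and collapsing the superfluous central directions --- is the correct repair and is essentially what the paper does; as written, though, your primary argument leaves the completeness of $\widetilde M$ unjustified.
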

\begin{proof}
Let $(\mathfrak{g}, \mathfrak{h}, \sigma, I, g)$ be the effective, semisimple, para-Hermitian symmetric system associated to $(N,I,g)$ and $\mathfrak{g}=\mathfrak{h}\oplus\mathfrak{n}$ be the $\sigma$-decomposition of $\mathfrak{g}$. We have that 
$\mathfrak{g}$ decomposes as sum of simple ideals $\mathfrak{g}_i$
$$\mathfrak{g}=\sum_{i=1}^n \mathfrak{g}_i,$$  
 which are stable under $\sigma$ and together with the restriction of $\sigma$
 $$\sigma_j:=\sigma|_{\mathfrak{g}_j},$$
 give simple symmetric systems $(\mathfrak{g}_j, \mathfrak{h}_j,\sigma_j)$, see \cite[Proposition~3.2]{KaneyukiKozai2}. 
By \cite[Lemma~3.1]{KaneyukiKozai2}, we have that there exists a unique element $Z=Z_1+\dots+Z_n \in \mathfrak{g}$ such that $I=ad_Z$ and $c_{\mathfrak{g}}(Z)=\mathfrak{h}$, with $Z_j\in \mathfrak{g}_j$ and $c_{\mathfrak{g}_j}(Z_j)=\mathfrak{h}_j$. In particular, the para-complex structure 
$I=ad_Z$,
restricts to each simple symmetric system and it is given by
\[
 I_j:=ad_{Z_j}.
\]  
Since the first part of the proof of Proposition~\ref{prop:metric}, and in particular equations \eqref{eq:B-omega} and \eqref{eq:CA}
hold for semisimple para-Hermitian symmetric spaces, we have that 
\[
 g(X,Y)=-B(A,[X,ad_Z Y])
\]
for every $X,Y\in \mathfrak{n}=T_o(G/H)$, $o=eH$,
where $A=A_1+\dots +A_n\in \sum_i\mathfrak{h}_i=\mathfrak{h}$ with $c_{\mathfrak{g}_j}(A_j)=\mathfrak{h}_j$. Then 
\[
 g(X,Y)=-\sum_{j=1}^n B_j(A_j,[X_j,ad_{Z_j}Y_j])
\]
where $X=X_1+\dots+X_n$, $Y=Y_1+\dots+Y_n$, and $X_j, Y_j\in \mathfrak{g}_j$. Then the semisimple, para-Hermitian symmetric system $(\mathfrak{g}, \mathfrak{h}, I, g)$ decomposes as sum of simple, para-Hermitian symmetric system $(\mathfrak{g}_j, \mathfrak{h}_j, ad_{Z_j}, g_j)$, where 
$$g_j(\cdot,\cdot):=-B_j(A_j,[\,\cdot,ad_{Z_j}\cdot])$$  
is either a real multiple of the Killing form $B_j$ of $\mathfrak{g_j}$, if $\mathfrak{h}_j$ as $1$-dimensional center, or it is of the form 
$$g_j(X,Y)= \lambda_j B(\cdot,\cdot)+\mu_j B(i\, \cdot,\cdot), \quad \lambda_j,\mu_j\in \mathbb{R},$$
if $\mathfrak{h}_j$ as $2$-dimensional center.
By Theorem~\ref{th:lambda}, the simple para-Hermitian symmetric spaces $$G_j/H_j=:N_j$$ associated to $(\mathfrak{g}_j, \mathfrak{h}_j, ad_{Z_j}, g_j)$ are base spaces of an $S^1$- or $\mathbb{R}$-bundle 
\[
 \pi_j:M_j\rightarrow N_j.
\]
In particular, the fundamental $2$-form $\omega_j$ of each $N_j$ is a real multiple of an integer class, namely $$[r_j\omega_j]\in H^2(N_j;\mathbb{Z}),$$
with $r_j\in \mathbb{R}$. 
In order to construct an $S^1$- or $\mathbb{R}$-bundle over $G/H$ we recall that it is necessary to assume that the fundamental $2$-form of $G/H$
$$\omega=\omega_1+\dots +\omega_n,$$ 
determines --up to a real constant $r$-- an integer class, namely that
\[
[r\,\omega]\in H^2(N;\mathbb{Z}),
\]
and hence $[r\,\omega_j]\in H^2(N_j;\mathbb{Z})$,
for every $j\in \{1,\dots,n\}$. We consider the $((S^1)^p\times \mathbb{R}^q)$-bundle
\[
 \pi:M_1\times \dots \times M_n\rightarrow N_1\times \dots \times N_n,
\]
with $p,q\in\mathbb{N}$, $p+q=n$, determined by the bundles $\pi_j:M_j\rightarrow N_j$, and observe that
\begin{enumerate}[a)]
\item
If $q=0$, by Section~9 of \cite{Kobayashi}, we know that all $S^1$-bundles over
$N_1\times \dots \times N_n$ are give by the quotient 
of
\[
 M_1\times \dots \times M_n
\]
by an antidiagonal action of a codimension-one torus $T\subset (S^1)^n$
\[
(M_1\times \dots \times M_n)/T\rightarrow N_1\times \dots \times N_n.
\]
\item
Similarly, if $p=0$, an antidiagonal action of $\mathbb{R}^{n-1}\subset \mathbb{R}^n$ on $M_1\times \dots \times M_n$ gives an $\mathbb{R}$-bundle
\[
 (M_1\times \dots \times M_n)/\mathbb{R}^{n-1}\rightarrow N_1\times \dots \times N_n.
\]
\item
If $p,q$ are both nonzero, using a) and b), we have an $(S^1\times \mathbb{R})$-bundle $X$ over
$N_1\times \dots \times N_n$. An antidiagonal action of $\mathbb{R}$ on $X$ gives than an
$S^1$-bundle over $N_1\times \dots \times N_n$
\[
 X/\mathbb{R}\rightarrow N_1\times \dots \times N_n.
\]
\end{enumerate}
In any case, we denote by $M$ the total space of the $S^1$- or $\mathbb{R}$-bundle over $N$ 
\[
 M\rightarrow N,
\]
and observe that, as the para-Sasakian $\phi$-symmetric structure on each $M_j$ and hence on the product $M_1\times \dots \times M_n$ descends to the quotient space $M$, we have that $M$ admits a para-Sasakian $\phi$-symmetric structure.
\end{proof}

\begin{example}
 We consider the symmetric triple (see Section 3 of \cite{KaneyukiKozai2})
 \[
  (sl(p+q,\mathbb{F}), \mathfrak{h} ,\sigma)
 \]
where
\begin{equation*}
 \begin{aligned}
  \mathfrak{h}=\left\{ \begin{pmatrix}
                                            A & 0\\
                                            0 & B
                                           \end{pmatrix}; \; A\in gl(p,\mathbb{F}), B\in gl(q,\mathbb{F}) \text{ and } tr A + tr B= 0\right\},\\
                                          % & \subset sl(p+q,\R)
 \end{aligned}
\end{equation*}
\[
 \sigma: sl(p+q,\mathbb{F}) \rightarrow sl(p+q,\mathbb{F}); \; X\mapsto 
                                          \begin{pmatrix}
                                            Id_p & 0\\
                                            0 & -Id_q
                                           \end{pmatrix} X  \begin{pmatrix}
                                                             Id_p & 0\\
                                                             0 & -Id_q
                                                            \end{pmatrix},
\] 
and $\mathbb{F}=\mathbb{R}$ or $\mathbb{C}$.
The $\sigma$-eigenspace decomposition of $sl(p+q,\mathbb{F})$ is given by
\[
 sl(p+q,\mathbb{F})=\mathfrak{h}\oplus \mathfrak{n},
\]
with
\[
       \mathfrak{n}:=\left\{ \begin{pmatrix}
                              0 &A\\
                              B& 0
                             \end{pmatrix};\; A\in M(p,q;\mathbb{F}), B\in M(q,p;\mathbb{F})\right\}.
      \]     
If we take
\[
 Z:=\begin{pmatrix}
     -\frac{q}{p+q}Id_p &0\\
     0&\frac{p}{p+q}Id_q
    \end{pmatrix}\in \mathfrak{h}
\]
we have that $c(Z)=\mathfrak{h}$, and 
\[
 I:=ad_Z|_{\mathfrak{n}}:  \begin{pmatrix}
                              0 &A\\
                              B& 0
                             \end{pmatrix} \mapsto 
                             \begin{pmatrix}
                              0 &-A\\
                              B& 0
                             \end{pmatrix}
\]
is a paracomplex structure on $\mathfrak{n}$. Observe that $(I,\langle \,,\, \rangle)$, where $\langle \,,\, \rangle$ is the restriction to $\mathfrak{n}$ of the Killing form $B$ of $sl(p+q,\mathbb{F})$
\[
 \langle X,Y\rangle:=B|_{\mathfrak{n}}(X,Y),
\]
satisfies the conditions \ref{I}-\ref{IV}, p. \pageref{pagref}, and thus $(sl(p+q,\mathbb{F}),\mathfrak{h},\sigma, I,\langle \,,\, \rangle)$ is a simple para-Hermitian symmetric system.
\begin{itemize}
 \item If $\mathbb{F}=\mathbb{R}$, we have that the center of $\mathfrak{h}$ is $1$-dimensional and generated by $Z$. We consider 
\[
 \mathfrak{k}:=[\mathfrak{h},\mathfrak{h}]=\left\{ \begin{pmatrix}
                              A &0\\
                              0& B
                             \end{pmatrix};\; A\in sl(p,\R), B\in sl(q,\R)\right\}.
                             \]
\item If $\mathbb{F}=\mathbb{C}$, then $\mathfrak{z(h)}=span_{\mathbb{R}}\{Z,iZ\}$ and we take 
\[
 \mathfrak{k}:=[\mathfrak{h},\mathfrak{h}]+span_{\mathbb{R}}\{iZ\}
\]
\end{itemize}
We have that $$sl(p+q,\R)= \mathfrak{m}\oplus \mathfrak{k},$$
with $\mathfrak{m}:=\mathfrak{n}\oplus \R Z$. Let $K$ denote the closed Lie subgroup associated to $\mathfrak{k}$. By Theorem~\ref{th:lambda}, $SL(p+q,\R)/K$ is a para-Sasakian $\phi$-symmetric manifold fibering via the Boothby--Wang fibration, over the para-Hermitian symmetric space $SL(p+q,\R)/H$.
\end{example}

\section{A non-semisimple example}\label{sec:non-semisimple}
We consider the Lie algebra $\mathfrak{sl}(2,\R)$ and the involution $\sigma$ on $\mathfrak{sl}(2,\R)$ defined by
$$\sigma=\text{diag}(1,-1,-1)$$ 
with respect to a basis
 $\{e_1,e_2,e_3\}$ with
 \[
  [e_1,e_2]_{\mathfrak{sl}(2,\R)}=e_3, \; [e_1,e_3]_{\mathfrak{sl}(2,\R)}=e_2, \; [e_2,e_3]_{\mathfrak{sl}(2,\R)}=e_1.
 \]
We denote by $\{\eta_1,\eta_2,\eta_3\} \subset \mathfrak{sl}^*(2,\R)$ and by $\sigma^*$ respectively the dual basis of $\{e_1,e_2,e_3\}$ and the dual map of $\sigma$. 
On the vector space 
$$\mathfrak{d}:=\mathfrak{sl}(2,\R)\oplus \mathfrak{sl}^*(2,\R)$$
we define an involution $\theta$, an inner product $\langle\,,\,\rangle$, and a Lie algebra structure $[\cdot,\cdot]$ as follows: 
\begin{itemize}
 \item  $\theta(\alpha+x):=\sigma^*(\alpha)+\sigma(x), \; \alpha\in \mathfrak{sl}^*(2,\R), x\in \mathfrak{sl}(2,\R)$,
 \item $\langle \alpha_1+x_1,\alpha_2+x_2\rangle:= \alpha_1(x_2)+\alpha_2(x_1), \; \alpha_i\in \mathfrak{sl}^*(2,\R), x_i\in \mathfrak{sl}(2,\R)$,
 \item $ [\cdot,\cdot]:\mathfrak{d}\times\mathfrak{d}\rightarrow \mathfrak{d}$ is the
bilinear, antisymmetric map determined by
\begin{equation*}
 \begin{aligned}
 & \left[e_1,e_2\right]= e_3+c\eta_3, \quad \left[e_1,e_3\right]= e_2-c\eta_2, \quad \left[e_2,e_3\right]= e_1+c\eta_1,\\
 &\left[\eta_1,e_2\right]= \eta_3, \quad \left[\eta_1,e_3\right]= -\eta_2, \quad 
  \left[\eta_2,e_1\right]= \eta_3, \quad  \left[\eta_2,e_3\right]= -\eta_1, \\
 & \left[\eta_3,e_1\right]= \eta_2, \quad  \left[\eta_3,e_2\right]= -\eta_1,
 \end{aligned}
\end{equation*} 
$c\in \mathbb{R}$, $c\neq 0$.
\end{itemize}

We remark that this construction corresponds to a 
quadratic extension of $(\mathfrak{sl}(2,\R), \sigma)$, see \cite[Section~4]{ines}.

\vspace{0.2 cm}

Let $\mathfrak{n}, \mathfrak{h}\subset\mathfrak{d}$ be respectively the $-1$- and $1$-eigenspace of the involution $\theta$,
\[
 \mathfrak{h}=\{e_1,\eta_1\}, \; \mathfrak{n}=\{e_2,e_3,\eta_2,\eta_3\}.
\]
Observe that the endomorphism 
$$J:=ad_{Z}:\mathfrak{n}\rightarrow \mathfrak{n},$$
with $Z=e_1-c\eta_1$, satisfies 
$$J^2=id,$$
and, as 
$\langle \,,\,\rangle$ is $ad_\mathfrak{d}$-invariant, 
$$
\langle J X,Y\rangle+\langle X,JY\rangle=0,  
$$
for every $X,Y\in \mathfrak{n}$.
Thus, by \cite[Proposition~2.4]{KaneyukiKozai2}, the symmetric triple $(\mathfrak{d},\mathfrak{h},\theta)$
is a (non-semisimple) para-Hermitian symmetric system; we denote by $(D/H,g,J)$ the associated (non-semisimple) para-Hermitian symmetric space.

Let $\mathfrak{k}=\mathbb{R}C$ with $C$ any element in $\mathfrak{h}=span_\R\left\{e_1,\eta_1\right\}$. We consider $\widetilde{C}$ any complement of $C$ in $\mathfrak{h}$ and denote 
 $\mathfrak{m}:=\mathfrak{n}\oplus \R\widetilde{C}$.
Observe that the connected Lie subgroup $K$ of $H$ associated to $\mathfrak{k}$ is closed: 
The Lie algebra $\mathfrak{h}$ is $2$-dimensional and abelian, and then the Lie group $H$ is isomorphic to either $\R^2$ or $S^1\times \R$ or $S^1\times S^1$. Notice that $H$ can not be isomorphic to $S^1\times S^1$, otherwise $D/H$ would be a Riemannian symmetric space and the Lie algebra $\mathfrak{d}$ would decompose as direct sum of an abelian and a semisimple Lie algebra (see for instance \cite[Chapter V]{Helgason}).
Looking at the explicit Lie algebra structure of $\mathfrak{d}$ we see that this can not be true, as $\mathfrak{sl}^*(2,\R)$ is an abelian ideal of $\mathfrak{d}$ which can not be contained in the abelian part of $\mathfrak{d}$. Thus, since any Lie subgroup of $\R^2$ and of $S^1\times \R$ is closed, we have that $K$ is closed and $D/K$ is a manifold.
As in the previous section, we consider on $D/K$ the tensors $\xi,\phi,\eta,g$ determined by $D$-invariance, respectively from 
\begin{itemize}
 \item $\xi_0:=\alpha \widetilde{C}\in\mathfrak{m}$, $\alpha\in \mathbb{R}$, $\alpha\neq 0$,
 \item $ ad_Z:\mathfrak{m}\rightarrow \mathfrak{m}$,
 \item $\eta_0:\mathfrak{m}\rightarrow \R \text{ linear, such that } \eta_0(\xi_0)=1, \eta_0(X)=0 \text{ for every } X\in \mathfrak{n}$,
 \item $\langle\langle\;,\;\rangle\rangle: \mathfrak{m}\times \mathfrak{m}\rightarrow \R$ bilinear, symmetric, such that  
 \[
  \langle\langle \xi_0, \xi_0\rangle\rangle=1,\; \langle\langle X,Y \rangle\rangle=\langle X,Y\rangle, \; \langle\langle\xi_0,X\rangle\rangle=0 ,
 \]
 for every $X,Y\in \mathfrak{n}$.
\end{itemize}
These tensors $(\xi,\phi,\eta,g)$ satisfy clearly conditions \eqref{eq:paracontactmatric1}, \eqref{eq:paracontactmatric2}. To prove the compatibility condition \eqref{eq:paracontactmatric3} we consider (instead of the Killing form, considered in the semisimple case) the $ad_\mathfrak{d}$-invariant metric $\langle\,,\rangle$. As in Theorem~\ref{th:lambda}, we have that
\begin{equation*}
 \begin{aligned}
  2 d\eta_0(X,Y)&=-\eta_0([X,Y]_{\mathfrak{m}})
 \end{aligned}
\end{equation*}
where $X,Y\in \mathfrak{m}$, and equation \eqref{eq:paracontactmatric3} is equivalent to
\begin{equation}\label{eq:2lambdaBns}
 2 \langle\langle X,\phi Y \rangle\rangle=-\eta_o([{X},{Y}]_{\mathfrak{m}}),
\end{equation}
for every $X,Y\in \mathfrak{m}$. If $X\in \mathbb{R}\widetilde{C}$ or $Y\in \mathbb{R}\widetilde{C}$, by the definition of Lie bracket on $\mathfrak{d}$, we have that \eqref{eq:2lambdaBns} is trivially satisfied. If $X,Y\in \mathfrak{n}$, then $[X,Y]\in \mathfrak{h}$,
\[
 [X,Y]=a C + b \widetilde{C}
\]
with $a,b\in \R$. Thus, 
\[
 \eta_o([{X},{Y}]_{\mathfrak{m}})=\eta_o(b \widetilde{C})=\alpha^{-1}b,
\]
and
\begin{equation*}
 \begin{aligned}
  \langle\langle X,\phi Y\rangle\rangle &= \langle X,ad_Z Y\rangle=-\langle [X,Y],Z \rangle=-a\langle C,Z \rangle-b\langle \widetilde{C},Z\rangle,
 \end{aligned}
\end{equation*}
and then \eqref{eq:2lambdaBns} holds if and if and only if 
$$\langle C,Z \rangle=0, \; \langle \widetilde{C},Z\rangle\neq 0, \; \alpha= -\frac{1}{2\langle\widetilde{C},Z\rangle}.$$ 
As in the last part of Theorem~\ref{th:lambda}, we have that the para-contact metric structure $(\xi,\phi,\eta,g)$ on $D/K$ is para-Sasakian $\phi$-symmetric.

\end{document}